\newtheorem{prop}{Proposition}
\newtheorem{corollary}{Corollary}
\newtheorem{theorem}{Theorem}
\theoremstyle{remark}
\newtheorem{remark}{\bf Remark}
\newtheorem{definition}{\bf Definition}
\newtheorem{conjecture}{\bf Conjecture}
\let\p\partial
\renewcommand{\d}{{\mathrm d}}
\newcommand{\Jac}{\operatorname{Jac}}
\newcommand{\Braid}{\operatorname{Braid}}
\renewcommand{\d}{{\mathrm d}}
\newcommand{\DE}{\operatorname{DE}}
\newcommand{\Z}{\mathbb{Z}}
\newcommand{\Q}{\mathbb{Q}}
\newcommand{\C}{\mathbb{C}}
\newcommand*\circled[1]{\tikz[baseline=(char.base)]{
  \node[shape=circle,draw,inner sep=1pt] (char) {#1};}}
\newcommand\miprod{\mathop{\circled{\tiny P}}}
\begin{document}

\title[A generating function of the squares of~Legendre~polynomials]{A hyperelliptic
saga on a generating function of~the~squares~of~Legendre~polynomials}

\author{Mark van Hoeij}
\address{M.v.H.: Mathematics Department, Florida State University, Tallahassee, FL 32306, U.S.A.}
\urladdr{www.math.fsu.edu/~hoeij/}

\author{Duco van Straten}
\address{D.v.S.: Fachbereich 08, AG Algebraische Geometrie, Johannes Gutenberg-Univer\-sit\"at, D-55099 Mainz, Germany}
\urladdr{www.agtz.mathematik.uni-mainz.de/algebraische-geometrie/van-straten/}

\author{Wadim Zudilin}
\address{W.Z.: Department of Mathematics, IMAPP, Radboud University, PO Box 9010, 6500~GL Nijmegen, Netherlands}
\urladdr{www.math.ru.nl/~wzudilin/}

\thanks{The first author (M.v.H.) was supported by NSF grant 2007959.
The second author (D.v.S.) acknowledges support by the Deutsche Forschungsgemeinschaft (DFG) through the Collaborative Research Center TRR 326 \textit{Geometry and Arithmetic of Uniformized Structures}, project number 444845124.}

\date{17 January 2024.  \emph{Revised}: 28 May 2024}

\subjclass[2010]{11F99, 11Y60, 14H45, 14Q05, 33C20, 33E30, 34M35}
\keywords{Picard--Fuchs differential operator; integrality; hyperelliptic curve; Humbert's equation; Legendre polynomial}

\begin{abstract}
We decompose the generating function
$\sum_{n=0}^\infty\binom{2n}nP_n(y)^2z^n$
of the squares of Legendre polynomials
as a product of periods of hyperelliptic curves.
These periods satisfy a family of \emph{second} order differential equations. This is highly unusual since \emph{four} is the expected order for genus~2.
These second order equations are arithmetic and yet, surprisingly, their monodromy group is dense in $\operatorname{SL}_2(\mathbb{R})$.
This suggests that they cannot be solved in terms of hypergeometric functions,
which is novel for arithmetic second order differential equations that are \emph{defined over} $\mathbb{Q}$,
and also novel for a \emph{family} of such equations.
We complement our analysis with a recipe for constructing similar examples.
\end{abstract}

\maketitle

\section{Introduction}
\label{sec}

It would be not mistaken to call Legendre polynomials
\[
P_n(y)=\frac1{2^nn!}\,\frac{\d^n}{\d y^n}\big((y^2-1)^n\big)
=\sum_{k=0}^n\binom nk\binom{n+k}k\bigg(\frac{y-1}2\bigg)^k
\]
the most classical \emph{special} polynomials.
They can also be given by a recurrence equation
\begin{equation}
\begin{gathered}
(n+1)P_{n+1}(y)-(2n+1)yP_n(y)+nP_{n-1}(y)=0 \quad\text{for}\; n\ge0,
\\
\text{with}\; P_0(y)=1 \;\text{and}\; P_{-1}(y)=0,
\end{gathered}
\label{Leg-rec}
\end{equation}
or by a generating function
\begin{equation}
F_{P}(y,z)=\sum_{n=0}^\infty P_n(y)z^n=\frac1{\sqrt{1-2yz+z^2}}.
\label{eqLeg}
\end{equation}
Though many features of Legendre polynomials appear in more general families of special polynomials,
there are still surprises kept by and open questions for this classic.

Legendre polynomials can also be expressed with Euler--Gauss hypergeometric functions
\[
{}_2F_1\biggl(\begin{matrix} a, \, b \\ c \end{matrix}\biggm|x\biggr)
=\mbox{}_2F_1(a, b; c \mid x)
=\sum_{n=0}^\infty\frac{(a)_n(b)_n}{n!\,(c)_n}\,x^n,
\]
where $(a)_n =\Gamma(a+n)/\Gamma(a) =\prod_{j=0}^{n-1}(a+j)$ is Pochhammer's symbol.
This is true for the polynomials themselves
\[
P_n(y)={}_2F_1\biggl(\begin{matrix} -n, \, n+1 \\ 1 \end{matrix}\biggm| \frac{1-y}2 \biggr)
=\biggl(\frac{y+1}2\biggr)^n{}_2F_1\biggl(\begin{matrix} -n, \, -n \\ 1 \end{matrix}\biggm| \frac{y-1}{y+1} \biggr),
\]
as well as for the generating function of their squares
\begin{equation}
F_{P^2}(y,z)=\sum_{n=0}^\infty P_n(y)^2z^n
=\frac1{\sqrt{1-2y^2z+z^2}} \cdot {}_2F_1\biggl(\begin{matrix} \frac14, \, \frac34 \\ 1 \end{matrix}\biggm|
\frac{4(1-y^2)^2z^2}{(1-2y^2z+z^2)^2}\biggr)
\label{Wan}
\end{equation}
extracted from \cite{Zu14} (for which another `more radical' formula was given by W.\,N.~Bailey in the 1940s).
Some recent arithmetic questions, which we briefly outline below, suggested the existence of `explicit' expressions for the twist
\begin{equation}
	\tilde{F}_{P^2}(y,z)=\sum_{n=0}^\infty\binom{2n}nP_n(y)^2z^n
\label{orig}
\end{equation}
and for the generating function of the cubes
\begin{equation}
	F_{P^3}(y,z)=\sum_{n=0}^\infty P_n(y)^3 z^n.
\label{cub}
\end{equation}
In spite of considerable efforts, no such formulas had been found.

Apart from \eqref{Wan}, other reasons to expect existence of $_2F_1$ type expressions for $\tilde{F}_{P^2}(y,z)$ and $F_{P^3}(y,z)$
are based on similarities the two share with one of Appell's bivariate hypergeometric functions,
\begin{equation}
\mathrm F_4\biggl(\begin{matrix} a, \, b \\ c_1, c_2 \end{matrix}\biggm| z , w\biggr)
=\sum_{m,n\ge0}\frac{(a)_{m+n}(b)_{m+n}}{m!\,n!\,(c_1)_m(c_2)_n}\,z^mw^n.
\label{App}
\end{equation}
For the latter, we have the celebrated Barnes--Bailey identity  \cite{Ba33,Ba34,Ba54} (another classic!)
\begin{equation}
\mathrm F_4\biggl(\begin{matrix} a, \, b \, \\ \, c_1, \, c_2\ \end{matrix}\biggm|  x(1-y),y(1-x) \biggr)
={}_2F_1\biggl(\begin{matrix} a, \, b \\ c_1 \end{matrix}\biggm|x\biggr)
\,{}_2F_1\biggl(\begin{matrix} a, \, b \\  c_2 \end{matrix}\biggm|y\biggr)
\label{F4}
\end{equation}
when $c_1+c_2 = a+b+1$ (see \cite[Section~3]{Zu19}).
The functions \eqref{orig}, \eqref{cub} and \eqref{App} (for special choices of $a$, $b$ and $c_1=c_2=1$) show up in an arithmetic context of formulae for $\pi$,
which were experimentally discovered by Zhi-Wei Sun \cite{Su12}.
Resolutions for related conjectures from \cite{Su12} make use of the decomposition~\eqref{F4} and its numerous generalisations\,---\,see~\cite{CWZ13,CWZ18,RS13,Wa14,WZ12,WY22}.
At the same time this methodology does not seem to extend to examples from \cite{Su12} that involve $\tilde{F}_{P^2}(y,z)$ and $F_{P^3}(y,z)$.
Proofs of some formulae for $\pi$ with $\tilde{F}_{P^2}(y,z)$ are given in \cite{Zu14}; the crucial ingredient of those is a formula for \eqref{orig} in a special case $z=4(y^2-1)/(y^2+3)^2$.

For all these reasons,
our initial goal was a general formula for $\tilde{F}_{P^2}(y,z)$ with a product like \eqref{F4}.
And while we did find a product whose factors satisfy second order equations, they did not turn out to be ${}_2F_1$ expressible. Instead of
elliptic integrals (very common for arithmetic second order equations) for the factors, we were surprised to witness {\em hyperelliptic} integrals.

\begin{theorem} \label{main}
The following formula holds\textup:
\begin{equation}
	\tilde{F}_{P^2}(y, z)  =  w \, I_+(4z ,w^2)I_-(4z,w^2)
	 \label{decoup0}
\end{equation}
where
\[
	w = \sqrt{ (1 + 4z)^2 - 16 y^2 z} \, + \,  4y\sqrt{-z} 
\]
and
\begin{equation}
I_{\pm}(u, x)=\frac1\pi\int_0^1
\frac{1 - uv \pm  v  \sqrt{2u^2 - 2u}}
{\sqrt{\smash[b]{v \, (1-v) \, \big( (1-v)(1-u^2v)(1+uv)^2 + x \, v(1-uv)^2 \big)}}} \, \d v \,.
\label{eqI}
\end{equation}
\end{theorem}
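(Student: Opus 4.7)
The plan is to verify the identity by showing that both sides satisfy the same Picard--Fuchs equation in $z$ (with $y$ kept as a parameter) and then matching the leading term at $z=0$. To obtain an ODE for the left-hand side, I would start from the Laplace--Mehler representation $P_n(y) = \pi^{-1}\int_0^\pi(y+\sqrt{y^2-1}\cos\phi)^n\,\d\phi$, summed against $\sum\binom{2n}n t^n = (1-4t)^{-1/2}$, to get
\[ \tilde F_{P^2}(y,z) = \frac{1}{\pi^2}\int_0^\pi\!\!\int_0^\pi \frac{\d\phi_1\,\d\phi_2}{\sqrt{1-4z\,(y+\sqrt{y^2-1}\cos\phi_1)(y+\sqrt{y^2-1}\cos\phi_2)}}. \]
This identifies $\tilde F_{P^2}$ as a two-variable period, from which Griffiths--Dwork reduction (or, equivalently, creative telescoping applied directly to the coefficient sequence $\binom{2n}n P_n(y)^2$) produces a Picard--Fuchs operator $L$, expected to be of order four for generic $y$.

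The heart of the proof is the factorisation $L = L_2 \otimes L_2$ as a differential tensor-square of a single second-order operator $L_2$ of hyperelliptic origin. Concretely, $L_2$ is the Picard--Fuchs operator governing the genus-two family $t^2 = v(1-v)\bigl((1-v)(1-u^2v)(1+uv)^2 + x\,v(1-uv)^2\bigr)$ underlying \eqref{eqI}, with the substitution $u=4z$, $x=w^2$ built in; the two integrals $I_\pm(4z,w^2)$ are then the pair of solutions of $L_2$ conjugate under the Galois involution $\sqrt{2u^2-2u}\mapsto-\sqrt{2u^2-2u}$. Their product is invariant under this involution, so $w\,I_+(4z,w^2)\,I_-(4z,w^2)$ is a single-valued meromorphic function on the $(y,z)$ domain and lies in the solution space of $L$. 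Once the factorisation is in place, the theorem reduces to matching one initial value: at $z=0$ the algebraic function $w$ equals $1$ and $I_\pm(0,1)=1$ (both immediate from the formulas), in agreement with $P_0(y)^2=1$.

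The main obstacle is the \emph{discovery} of the explicit pair $(w, L_2)$ that realises this factorisation. No hypergeometric shortcut is available here: as the abstract emphasises, $L_2$ has monodromy dense in $\mathrm{SL}_2(\mathbb{R})$ and is not ${}_2F_1$-reducible, so no Barnes--Bailey-type identity like \eqref{F4} can produce the decoupling. The correct $w = \sqrt{(1+4z)^2 - 16y^2 z} + 4y\sqrt{-z}$ must be reverse-engineered from the singular locus and exponents of $L$ together with the rationality requirement on $w^2$, while the specific hyperelliptic integrand in \eqref{eqI} (including the two signs distinguishing $I_+$ from $I_-$) is dictated by the Galois-conjugation structure and the need for $I_+\cdot I_-$ to extend single-valuedly across the branch locus. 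Everything after that discovery --- the tensor-square check, singularity matching, and initial-condition verification --- is routine differential algebra, but locating the pair $(w,L_2)$ is essentially the entire hyperelliptic saga of the paper.
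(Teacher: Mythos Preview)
Your overall strategy---compute the fourth-order Picard--Fuchs operator $L=\DE_z$ for $\tilde{F}_{P^2}$, decompose it as a symmetric product of second-order operators, identify $I_\pm$ as solutions of the factors, and match initial data---is exactly the paper's approach. The paper states the short proof in one line (``compute a differential equation for both sides and compare initial conditions'') and then devotes Section~\ref{discovery} to the discovery process you correctly identify as the real content.

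There is, however, a genuine slip in your description of the decomposition. The factorisation is \emph{not} a tensor square $L_2\miprod L_2$ of a single second-order operator: that would produce an operator of order~$3$, whereas $\DE_z$ has order~$4$. What the paper finds is $\DE_z = L_+^z \miprod L_-^z$, a symmetric product of two \emph{distinct} second-order operators interchanged by the Galois involution $\sqrt{2u^2-2u}\mapsto -\sqrt{2u^2-2u}$ (equivalently $t\mapsto -t$ after the rationalising substitution). The integral $I_+$ solves $L_+^z$ and $I_-$ solves $L_-^z$; they are not two solutions of the same equation. This is not cosmetic: it explains why the product $I_+I_-$ (as opposed to $I_+^2$) lands in the correct four-dimensional solution space, and why the asymmetric prefactor $w$ is needed to descend the result to something single-valued over~$\mathbb{Q}(y,z)$.

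A smaller point: ``matching one initial value'' is not literally enough for an order-$4$ equation. What makes it work is that both sides are holomorphic power series at $z=0$ and the space of such solutions of $\DE_z$ is one-dimensional; the paper's plural ``initial conditions'' and the analytic discussion in Remark~\ref{Remark1} quietly cover this, but your write-up should say so explicitly.
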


\begin{proof}
It suffices to compute a differential equation
for both sides of \eqref{decoup0} and then compare initial conditions.
(The equation for $I_{\pm}(u,x)$ is in Section~\ref{RM-check}.)
\end{proof}

While this proof is technically valid, two issues remain.
First, it does not explain \emph{how the formula was found}\,---\,and this was not easy!\,---\,the details
are in Section~\ref{discovery} (which also serves as a more detailed proof).
Second, and more important, the proof ignores the fascinating properties of the objects involved.
The geometry behind the second order linear differential equation satisfied by $I_{\pm}(u,x)$ is quite remarkable.
The integral in \eqref{eqI} represents periods of hyperelliptic curves, of genus~2, for which it is highly unusual to satisfy \emph{second} order linear differential equations since \emph{four} is the expected order.
These equations are arithmetic%
\footnote{Here, arithmeticity of a differential equation refers to the existence of a globally bounded solution, that is, of a power series solution $f(x)=\sum_{n=0}^\infty a_n x^n\in1+x\mathbb Q[[x]]$ such that $f(c x)\in\mathbb Z[[x]]$ for some non-zero constant $c$.}
and yet, their monodromy group is {\em dense} in $\operatorname{SL}_2(\mathbb{R})$. This suggests that they cannot be solved in terms of $_2F_1$ hypergeometric functions,
which is novel for an arithmetic second order equation that is defined over $\mathbb{Q}$.
Indeed, in the OEIS \cite{OEIS}
there are no examples yet\footnote{At the time of writing, but this can now be changed by uploading an instance of Theorem~\ref{th1} or Remark~\ref{Remark2}!}
of $a_n$ in $\mathbb Z$ with $f(x) = \sum_{n=0}^\infty a_n x^n$
holonomic of order~2, a positive radius of convergence, where $f(x)$ is not expressible in terms of
algebraic functions, $\exp$, $\log$, ${}_2F_1$, etc.
This is despite the fact that the database \cite{OEIS} records {hundreds} of non-trivial examples that are arithmetic and differential order~2.

Being defined over $\mathbb{Q}$ and having a dense monodromy group (Theorem~\ref{monodromy-dense})
are not the only novelties in our arithmetic non-${}_2F_1$ example.  Also novel is that
it is not an isolated equation, but rather a family of equations, in which the 4 non-apparent singularities can move freely.\footnote{This requires $4-3 = 1$ parameter
since one can freely move 3 singularities with a M\"obius transformation.}
This discovery of a new class of arithmetic second order differential equations led to a considerable expansion of what was originally planned to be an ordinary mathematics paper finishing about the place where Section~\ref{discovery} ends.
The current exposition includes a discussion of the monodromy of the differential operator encountered and its explicit connection with the geometry of genus~2 curves with a specific feature\,---\,real multiplication. This is done in Sections~\ref{RM-check} and~\ref{monodromy}, while Section~\ref{dwork} compares our example to prior counter examples to a conjecture of Dwork.

For the generating function \eqref{cub} we give an identity of different sort in Section~\ref{cube}.
Perhaps a decomposition of \eqref{cub} similar to the one in Theorem~\ref{main} exists, but over an elliptic extension of $\mathbb Q(z)$.
Finally, Section~\ref{final}
summarizes directions for further exploration of the novelty.

\begin{remark}[analytic aspects of formula \eqref{decoup0}]
\label{Remark1}
For fixed $y \in \mathbb{C}$, the series $\tilde{F}_{P^2}(y,z) \in \mathbb{C}[[z]]$ has radius of convergence
\[
R_y = \frac{\min( | y \pm \sqrt{y^2-1} | )^2}{4}.
\]
We have $R_y > 0$ and $R_y \leq 1/4$ for all $y \in \mathbb{C}$, with equality if and only if $y$ is a real number in the interval $[-1,1]$.
Theorem~\ref{main} holds for all $y ,z \in \mathbb{R}$ with $|z| < R_y$.  It also holds for $y, z \in \mathbb{C}$ if $|z|$ is
small enough to ensure that the real parts of $(1 + 4z)^2 - 16 y^2 z =  1 + \mathcal{O}(z)$ and $(1-v)(1-u^2v)(1+uv)^2 + x v(1-uv)^2 = 1 + \mathcal{O}(z^{1/2})$ are positive for any $v \in [0,1]$
(here $u = 4z$ and $x = w^2$). This restriction on $|z|$ ensures that two of the square roots in Theorem~\ref{main} do not suddenly change sign
(the signs of $\sqrt{-z}$ and $\sqrt{2u^2 - 2u}$ do not affect the validity of~\eqref{decoup0}).
\end{remark}

\section{How the formula was found} \label{discovery}

Throughout this section we write $F(y,z)$ for $\tilde{F}_{P^2}(y,z)$.
This function satisfies the following differential system of holomorphic rank~4 
\begin{align*}
z \bigg(y^2-2z-\frac12 \bigg)\frac{\p^2F}{\p y\,\p z}
+\frac{y}2(1-y^2)\frac{\p^2F}{\p y^2}
-(y^2+z)\frac{\p F}{\p y}+yz\frac{\p F}{\p z}
&=0,
\\
F + \bigg(2z^2-\frac{z}2\bigg)\frac{\p^2F}{\p z^2} + \bigg(5z-\frac12\bigg)\frac{\p F}{\p z}
+\bigg(y-\frac1y\bigg)\bigg( \bigg(z+\frac14\bigg)\frac{\p^2F}{\p y\,\p z}
+\frac12 \frac{\p F}{\p y} \bigg) & = 0
\end{align*}
obtained from \cite[eq.~(1.4)]{Zu14}:
$$
F(y,z)=\sum_{n=0}^\infty\binom{2n}nz^n\sum_{k=0}^n\binom nk\binom{n+k}n\binom{2k}k\bigg(\frac{y^2-1}4\bigg)^k
$$
with creative telescoping.
After eliminating derivatives with respect to one variable we obtain fourth order linear differential equations
$\DE_y\in \mathbb{Q}(y,z)[\d / \d y]$ and $\DE_z\in \mathbb{Q}(y,z)[\d / \d z]$.
They are too large to print here but are available on our website~\cite{supp}.
One can also construct $\DE_z$ by using \texttt{Maple}'s \texttt{gfun} to convert a recurrence for $\binom{2n}n P_n(y)^2$ to a differential equation.

\begin{definition}\label{def1}
For a differential operator $L$,
let $V(L)$ denote the solution space of $L$ in a universal extension.
The \emph{least common left multiple}
$\operatorname{LCLM}(L_1, L_2)$
is the lowest order differential equation with $y_1 + y_2 \in V(L)$ for any $y_1 \in V(L_1)$, $y_2 \in V(L_2)$.
The \emph{symmetric product}  $L_1 \miprod L_2$  is defined similarly, just replace $y_1 + y_2$ by $y_1 \cdot y_2$.
\end{definition}

\subsection{Decomposing $\DE_z$} 
The first step to solving a differential equation is to check if it factors \cite{vH96}, however, $\DE_y$ and $\DE_z$ are irreducible.
The next step is the algorithm from \cite{vH02} which detects if a fourth order equation
is a symmetric product. It finds
$\DE_z = L^z_{+} \miprod L^z_{-}$, where
\begin{align}
L^z_{\pm}
&:= z(1-4z)(1+4z)^2((1+4z)^2-16y^2z)\,\frac{\d^2}{\d z^2}
\nonumber\\[-3pt] &\quad
   + (8z(48z^2+8z-3)y^2+(1-32z^2)(1+4z)^2)(1+4z)\,\frac{\d}{\d z}
\nonumber\\ &\quad
   + (64z^3+80z^2+4z-1)y^2 - 3z(1+4z)^3
\nonumber\\ &\quad
   \pm (1-8z)y \sqrt{2(1-4z)((1+4z)^2-16zy^2)}.
\label{L1}
\end{align}
For several reasons mentioned in the introduction, we very much expected $L^z_{+} $ and $L^z_{-} $ to have $_2F_1$ type solutions.
Our search for $_2F_1$ solutions was
time consuming because the algorithms for this are only implemented for rational function coefficients.
(The square root in $L^z_{\pm}$ has $z$-degree 3, so it cannot be rationalized.)

\subsection{Decomposing $\DE_y$}  \label{DEy}
Next we applied algorithm \cite{vH02} to $\DE_y$
and found it to be a symmetric product as well, namely $L^y_{+} \miprod L^y_{-}$ where
\begin{align}
L^y_{\pm} &= 2(1-y^2)(2y^2-4z-1)^2((1+4z)^2-16zy^2)\,\frac{\d^2}{\d y^2}
\nonumber\\[-8pt] & 
+4y(2y^2-4z-1)(32zy^4 - (1+4z)((1+28z)y^2 - 4z(3+4z)))\,\frac{\d}{\d y}
\nonumber\\ & 
+ 16(2y^2- 14z - 3)zy^4 + (1+4z)(1+32z+80z^2)y^2 - (1+4z)^2(1+4z+8z^2)  \nonumber\\ & 
\pm (2(1+8z)y^2 -  4(1+4z)(1-z))y\sqrt{2(1-4z)((1+4z)^2 - 16zy^2)}.
\label{L2}
\end{align}
The polynomial in the square root has $y$-degree 2.
So this time we can rationalize it with a substitution. We choose
\begin{equation} \label{subs_xt}
y = \left(x + (t^2-1)^2\right) \sqrt{z/x}, \quad
z=\frac1{4(1-2t^2)}.
\end{equation}
Substituting \eqref{subs_xt} in \eqref{L2} produces equations $\tilde{L}^x_{\pm}$.
The factor $\sqrt{z/x}$ in $y$ does not cause a square root in $\tilde{L}^x_{\pm}$ because $F(y,z) = F(-y,z)$.

If a differential equation has non-zero exponents $e_1 \leq e_2$ at a point $p$,
then dividing its solutions by $(x-p)^{e_1}$ creates an exponent 0 at that point, which usually reduces the size of the equation.
Applying this idea to $\tilde{L}^x_{+}$ we divide its solutions by
\begin{equation}
\sqrt[4]{ x \frac{ x + (t-1)(t+1)^3 }{ x + (t+1)(t-1)^3 } }
\label{algfactor}
\end{equation}
which results in
\begin{align*}
&
L_{+}^x = \frac{\d^2}{\d x^2} + \left( \frac1x - \frac1{x+(t+1)(t-1)^3} + \frac{2(x+t^4+2t^2-1)}{x^2+(2t^4+4t^2-2)x +  (t^2-1)^4} \right)\frac{\d}{\d x}
\\ &\;
+ \frac{4x^2+(8t^4-16t^3-4t^2+20t-9)x  + (4t^4-8t^3+12t^2+4t-5)(t+1)(t-1)^3 }{16x (x^2+(2t^4+4t^2-2)x+(t^2-1)^4)  (x+(t+1)(t-1)^3)}.
\end{align*}
Repeating this for $L^y_{-}$ produces $L_{-}^{x} = L_{+}^{x} \rvert_{t \mapsto -t}$.  Then
\[
	\DE_y = L_1^x \miprod L_{+}^{x} \miprod L_{-}^{x}
\]
where \eqref{subs_xt} expresses $y,z$ in terms of $x,t$ and where $L_1^x = \frac{\d}{\d x} - \frac1{2x}$
whose solution $\sqrt{x}$ is the product of \eqref{algfactor} for $t$ and $-t$.

Let $F_{+}^{x} = \sum_{n=0}^{\infty} a_n x^n \in \mathbb{Q}(t)[[ x ]]$ be the unique monic ($a_0 = 1$)  non-logarithmic  solution 
of $L_{+}^{x}$ at $x = 0$.
Replacing $t$ with $-t$ gives the corresponding solution $F_{-}^{x}$ of $L_{-}^{x}$.
Then the product
\begin{equation}
	\sqrt{x} \cdot F_{+}^{x} \cdot F_{-}^{x}
	\label{producty}
\end{equation}
is a solution of $\DE_y$.
The fact that $F(y,z)$ also satisfies $\DE_y$ does not directly provide
a functional equality: $F_{\pm}^{x}$ are series at $x=0$, which under \eqref{subs_xt} corresponds to $y = \infty$, but that makes the radius of convergence ($R_y$ from Remark~\ref{Remark1}) equal to 0.
This problem disappears if we take solutions of $L_{\pm}^{x}$ at another singularity, namely $x = -(1+\sqrt{2t^2-1})^4  =  -(1+1/(2\sqrt{-z}))^4$
(note that $\sqrt{-z}$ appears in Theorem~\ref{main}).
The next task is to solve $L_{\pm}^{x}$, which unexpectedly produced a hyperelliptic integral.

\subsection{Solving the differential equation with a Hadamard product}
\label{Had}
To ensure that all ${}_2F_1$ algorithms \cite{vHK19,IvH17}
inside \texttt{hypergeometricsols} from \texttt{Maple}~2021 were applicable,
we reduced the coefficients of $L_{+}^{x}$ to $\mathbb{Q}(x)$ by substituting values for $t$.
However, to our surprise, we found no $\mbox{}_2F_1$ solution.
In an effort to prove this, we computed the monodromy group in Section~\ref{monodromy}.

Since $F(y,z) \in \mathbb{Z}[y]((z))$, we expect equations obtained from $F(y,z)$ to be arithmetic;
we expect that there exists some $c \in \mathbb{Q}(t)\setminus\{0\}$ such that $a_n c^n \in \mathbb{Z}[t]$ for all~$n$.
This turned out to be the case with $c = 2^6 (t^2-1)^4$.
To our surprise\,---\,the intermediate equations $L_{\pm}^z$ and $L_{\pm}^y$ did not have this feature\,---\,not only was $a_n c^n$ in $\mathbb{Z}[t]$,
it was even divisible by $\binom{2n}n$.
Then $\sum \tilde{a}_n  x^n$ with $\tilde{a}_n := a_n / \binom{2n}n$ should also satisfy an arithmetic equation.
One can find this equation with the package \texttt{gfun} in \texttt{Maple}, which can convert $L^x_{+}$ to recurrence for $a_n$, compute a recurrence for $\tilde{a}_n$, and convert back to a differential equation.
By solving that equation we expected to find a $\mbox{}_2F_1$ expression for $\sum \tilde{a}_n x^n$ but, surprisingly, this function is algebraic, similar to~\eqref{tildeu} below.

Let $L^{m} = L_{+}^{x}\big\rvert_{x \mapsto m}$ where $m  :=  \frac1{64x} - (t+1)(t-1)^3$.
Solving $L^m$ is equivalent to solving $L_{+}^{x}$.
The M\"obius transformation $x \mapsto m$ moves the apparent singularity to $x = \infty$ and reduces expression sizes
in equations~\eqref{eqS},~\eqref{eqA} below.
The exponents at $x=0$ are $\frac12, \frac12$ so
$L^m$ has a unique solution of the form $x^{1/2} \sum u_n x^n$ with $u_0 = 1$. A recurrence for $u_n$ can be computed with \texttt{gfun}, it
is given in Theorem~\ref{th1} below.
Now let $\tilde{u}_n := u_n / \binom{2n}n$ and $Z(x) := \sum \tilde{u}_n x^n$.
After computing and solving a differential equation for $Z(x)$ we find
\begin{equation}
	Z(x) = \frac{R_{-}^{1/8} + R_{+}^{1/8}}{2 \sqrt{S}}
	\label{tildeu}
\end{equation}
where
\begin{align}
R_{\pm} \hspace{-2pt} &= A \pm \sqrt{A^2-1}, \nonumber \\
S &= (1 - 16(t + 1)(t - 1)^3x)(1 + 2^6(t^3 + t^2 - t)x - 2^{10}(t^3 + t^2)(t - 1)^3x^2), \label{eqS}
\displaybreak[2]\\
A &=1+2^7(2t-1)^2x-2^{11}(t-1)^3(2t-1)(2t^2+5t-1)x^2 \nonumber
\\ &\quad
+2^{17}t(t-1)^6(2t^2+2t-1)x^3-2^{21} (t^3+t^2)(t-1)^9x^4.  \label{eqA}
\end{align}

For any $X$ the expression $(1+16X)^{1/8}$ is in $\mathbb Z[[X]]$.
Applying this to $R_{\pm}^{1/8} = (1 \pm 16 \sqrt{x} + \cdots)^{1/8}$
we find $R_{\pm}^{1/8}\in\mathbb Z[t][[\sqrt x\,]]$.
Since $R_{-} = R_{+}\rvert_{\sqrt{x} \mapsto -\sqrt{x}}$,
the numerator of~\eqref{tildeu} is in $2 \mathbb{Z}[t][[x]]$
and hence $Z(x) \in \mathbb Z[t][[x]]$.
This establishes an Ap\'ery-like result.

\begin{theorem}
\label{th1}
Define degree $4n$ polynomials $u_n$ by
$u_n=0$ for $n<0$,  $u_0=1$, and
\begin{align}
&
(n + 1)^2u_{n + 1}
\nonumber\\ &\;
- 2^2(16(t^4 - 6t^3 - 4t^2 + 6t - 1)(n^2 + n) + 4t^4 - 24t^3 - 12t^2 + 20t - 3)u_n
\nonumber\\ &\;
- 2^{11}t(t - 1)^3(t + 1)(8(t^2 + 2t - 1)n^2 - 2t^2 - 6t + 3)u_{n - 1}
\nonumber\\ &\;
+ 2^{18}t^2(t - 1)^6(t + 1)^2(2n + 1)(2n - 3)u_{n - 2}
= 0
\label{op1}
\end{align}
for $n \geq 0$.
Then $u_n \in \mathbb{Z}[t]$, and is divisible by $\binom{2n}{n}$.
\end{theorem}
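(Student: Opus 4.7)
My plan is to prove the sharper statement $\tilde u_n := u_n/\binom{2n}{n} \in \mathbb{Z}[t]$ for all $n\geq 0$, which yields both the integrality of $u_n$ and its divisibility by $\binom{2n}{n}$. Since the recurrence \eqref{op1} uniquely determines $u_n$, and Section~\ref{Had} gives the closed form \eqref{tildeu} for $Z(x) := \sum_{n \geq 0}\tilde u_n x^n$, it suffices to show that the right-hand side of \eqref{tildeu} lies in $\mathbb{Z}[t][[x]]$. I split this into two sub-claims: the numerator $\eta := R_+^{1/8}+R_-^{1/8}$ lies in $2\mathbb{Z}[t][[x]]$, and $1/\sqrt S \in \mathbb{Z}[t][[x]]$.

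For the numerator, rather than analysing $R_\pm^{1/8}$ separately via $(1+16X)^{1/8}\in\mathbb{Z}[[X]]$ (which would require the delicate claim that $(A^2-1)/(2^8 x)$ is a square in $\mathbb{Z}[t][[x]]$), I exploit $R_+R_- = 1$ together with the Chebyshev identity $T_n\bigl((y+y^{-1})/2\bigr) = (y^n+y^{-n})/2$ at $y = R_+^{1/8}$, which yields $T_8(\eta/2) = A$. Direct expansion gives
\[
T_8(1+s) - 1 = 32\bigl(2s + 21 s^2 + 84 s^3 + 165 s^4 + 176 s^5 + 104 s^6 + 32 s^7 + 4 s^8\bigr),
\]
while \eqref{eqA} shows every nonconstant coefficient of $A$ is divisible by $2^7$, so $A - 1 = 128\alpha$ with $\alpha\in x\,\mathbb{Z}[t][x]$. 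Writing $\eta = 2(1+s)$ and substituting $s = 2s'$, the equation $T_8(1+s)=A$ simplifies to
\[
s' + 21(s')^2 + 168(s')^3 + 660(s')^4 + \cdots + 256(s')^8 = \alpha,
\]
a monic polynomial equation in $s'$ with integer coefficients and right-hand side in $\mathbb{Z}[t][[x]]$; the formal Implicit Function Theorem (or iterative inversion) yields a unique $s'\in\mathbb{Z}[t][[x]]$ with $s'(0) = 0$, so $\eta = 2 + 4s' \in 2\mathbb{Z}[t][[x]]$.

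For the denominator, factor $S = S_1 S_2$ with $S_1 = 1 - 16(t+1)(t-1)^3 x$ and $S_2 = 1 + 64 t(t^2+t-1)x - 2^{10}t^2(t+1)(t-1)^3 x^2$. The classical identity $(1-4Y)^{-1/2} = \sum_{n\geq 0}\binom{2n}{n}Y^n \in \mathbb{Z}[[Y]]$ handles $1/\sqrt{S_1}$ via $Y = 4(t+1)(t-1)^3 x$. For $S_2$, a short computation shows its discriminant as a quadratic in $x$ equals $2^{12}t^4(2t^2-1)$, so $S_2 = (1+c_+x)(1+c_-x)$ with $c_\pm = 32 t\bigl((t^2+t-1)\pm t\sqrt{2t^2-1}\bigr)$; applying the same identity to each linear factor and symmetrising under $\sqrt{2t^2-1}\mapsto -\sqrt{2t^2-1}$ gives $1/\sqrt{S_2}\in\mathbb{Z}[t][[x]]$. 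Multiplying the two sub-claims gives $Z(x) = (\eta/2)/\sqrt{S} \in \mathbb{Z}[t][[x]]$, proving $\tilde u_n \in \mathbb{Z}[t]$ for all $n$. The main obstacle I anticipate is the $2$-adic bookkeeping around the Chebyshev substitution: the clean rescaling $s = 2s'$ rests on the precise valuations $A - 1 \in 2^7\mathbb{Z}[t][x]$ and $T_8(1+s)-1 \in 32\mathbb{Z}[s]$, both explicit but somewhat coincidental-looking; a structural explanation tied to the hyperelliptic geometry behind $I_\pm$ would be more satisfying.
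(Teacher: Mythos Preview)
Your argument is correct and follows the paper's overall plan---show $Z(x)\in\mathbb{Z}[t][[x]]$ from the closed form~\eqref{tildeu}---but your treatment of the numerator $\eta=R_{+}^{1/8}+R_{-}^{1/8}$ is genuinely different. The paper passes to the larger ring $\mathbb{Z}[t][[\sqrt{x}\,]]$: it writes $R_{\pm}=1\pm16\sqrt{x}+\cdots$, applies $(1+16X)^{1/8}\in\mathbb{Z}[[X]]$ to each factor separately to get $R_{\pm}^{1/8}\in\mathbb{Z}[t][[\sqrt{x}\,]]$, and then uses the symmetry $R_{-}=R_{+}\big|_{\sqrt{x}\mapsto-\sqrt{x}}$ to land the sum in $2\mathbb{Z}[t][[x]]$. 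You instead stay in $\mathbb{Z}[t][[x]]$ throughout by exploiting $R_{+}R_{-}=1$ together with $T_{8}(\eta/2)=A$, and then invert the resulting monic-in-$s'$ equation. Your route cleanly sidesteps exactly the point you flagged: the paper's argument tacitly needs $\sqrt{A^{2}-1}\in16\sqrt{x}\,\mathbb{Z}[t][[\sqrt{x}\,]]$, which boils down to a factorisation of $A^{2}-1$ that holds but is not made explicit there; your Chebyshev trick replaces that verification by the transparent $2$-adic count $T_{8}(1+2s')-1\in128\,\mathbb{Z}[s']$ matching $A-1\in128\,\mathbb{Z}[t][x]$. On the denominator your quadratic-extension argument is fine but heavier than necessary: since every nonconstant coefficient of $S$ is visibly divisible by $16$, one may simply write $S=1+4W$ with $W\in x\,\mathbb{Z}[t][x]$ and conclude $S^{-1/2}=\sum_{n\ge0}\binom{2n}{n}(-W)^{n}\in\mathbb{Z}[t][[x]]$ directly.
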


Similar results are found by computing recurrences for solutions of $L^m$ or $L^x_{\pm}$ at other singular points.
The smallest recurrence we found this way is \eqref{op1}.

The generating function of $\binom{2n}n$ is $1/\sqrt{1-4x}$, so
we can express $\sum u_n x^n$ (the solution of $L^m$ without the factor $x^{1/2}$)
as
\[
	\sum_{n=0}^{\infty} u_n x^n = \sum_{n=0}^{\infty}  \binom{2n}n \tilde{u}_n \,x^n 
	= \frac1{\sqrt{1-4x}} \star Z(x),
\]
where the Hadamard product $\star$ is given by $\sum a_n x^n \star \sum b_n x^n =  \sum a_n b_n x^n$.

\subsection{Reducing the Hadamard product to a hyperelliptic integral} \label{sec3}
In general, if $u_n = \binom{2n}n \tilde{u}_n$ and $Z(x) = \sum_{n=0}^\infty \tilde{u}_nx^n$ then
\begin{equation}
\sum_{n=0}^\infty u_n x^n =  \frac1{\sqrt{1-4x}} \star Z(x) =\frac1\pi\int_0^{4x} \frac{Z(\xi)}{\sqrt{ \xi (4x-\xi) }} \,\d\xi.
\label{generalstar}
\end{equation}
Now let $Z(x)$ be the algebraic function~\eqref{tildeu}.
The algebraic relation between $Z(x)$ and $x$ is
$((4SZ^2 - 2)^2 - 2)^2 - 2 - 2A = 0$ divided by $1-16(1+t)(1-t)^3 x$.
Remarkably, there is a rational parametrization, however, allowing
a square root makes the parametrization {\em much} shorter:
\begin{align}
x&=\frac{(t^2-v^2)}{16t(tv^4+(t+1)(t-1)^2(t^2-2v^2-t))}, \label{xv} \\
Z&= \frac{(t-1+v)(tv^4+(t+1)(t-1)^2(t^2-2v^2-t)) }{v(v^4-2t^2v^2+(t^2-1)^2)} \sqrt{\frac{v-t}{2t(t^2+tv-1)}}. \label{Zv}
\end{align}
Substituting the right-hand sides of~\eqref{Zv}, \eqref{xv} for $Z(\xi), \xi$ in~\eqref{generalstar}
gives
\begin{equation}
\frac{\sqrt{-2}}{\pi}\int
\frac{(v+t-1) \,\d v}{\sqrt{(v+t) (v+t-t^{-1}) (  v^2-t^2  +  64tx(tv^4+(t+1)(t-1)^2(t^2-2v^2-t))  )}}.
\label{hyperY}
\end{equation}
Reinserting the factor $x^{1/2}$ and reversing the transformation $x \mapsto m$
gives a solution of $L^{x}_{+}$. Replacing $t$ by $-t$ gives a solution of $L^{x}_{-}$. Then we get a solution of $\DE_y$ just like~\eqref{producty}.
Reversing~\eqref{subs_xt} to write it in terms of $y$ and $z$ produces a formula similar to~Theorem~\ref{main}.
We made further substitutions for $u$ and $x$ to minimize the expression size, see our website \cite{supp} for details.
To verify that the resulting formula, in Theorem~\ref{main}, is still correct,
we computed a differential equation for it and compared initial conditions.

\section{Curves with real multiplication}
\label{RM-check}
Let $\mathcal{L} = \mathbb{Q}(u, \sqrt{2u^2-2u})$ where $u = 4z$ 
and $\sqrt{2u^2-2u}$ appear in Theorem~\ref{main}.
Parametrization~\eqref{subs_xt} rationalizes this to $\sqrt{2u^2 - 2u} = 2tu$ and $u = 1/(1-2t^2)$,
so $\mathcal{L} = \mathbb{Q}(t)$.
One of the most remarkable aspects is that the hyperelliptic integrals
$I_{\pm}(u,x)$ from Theorem~\ref{main} satisfy second order equations, namely
\begin{equation}
	L_{\pm}^{\text{PF}} := \ L^{x}_{\pm} \big\vert_{x\,\mapsto\,\frac{\,-x\,}{4u^2}} \in   \mathcal{L}(x)[\d / \d x].
	\label{PF}
\end{equation}

The Picard--Fuchs equation for a hyperelliptic integral of genus $g$ almost always has order $2g$, so it is very surprising that $I_{\pm}(u,x)$ satisfies an equation of order~2.
This motivates us to look for special properties of the corresponding hyperelliptic curve~\eqref{Onder}.
We detected multiplication by $\sqrt{2}$ in our monodromy computation in Section~\ref{monodromy}. Here we verify this property with a Humbert relation.

Theorem~\ref{main} contains the square root of
\begin{equation}
H:= v \, (1-v) \, \big( (1-v)(1-u^2v)(1+uv)^2 + x v(1-uv)^2 \big) \in \mathbb{Q}(u,x)[v].
\label{Onder}
\end{equation}
The equation $Y^2 = H(x,u,v)$ defines a family of hyperelliptic curves $C=C^{u,x}$.
Special properties may be visible in the Jacobian $\Jac(C) := H^0(\Omega_C)^*/H_1(C, \mathbb Z)$
which is isomorphic to a two dimensional complex torus $\C^2/\Lambda$. To obtain $\Lambda$, 
choose a symplectic basis $\alpha_1,\beta_1, \alpha_2,\beta_2$ (Figure~\ref{HB} in Section~\ref{monodromy})
for the homology group $H_1(C,\Z)$, and a basis
of holomorphic differentials:
\begin{equation}
	\omega_1 := \frac{\d v}{\sqrt{H}}, \quad \omega_2:=\frac{v\, \d v}{\sqrt{H}}  \in H^0(\Omega_C). \label{omega}
\end{equation}
Now $\Lambda$, the period lattice, is generated over $\Z$ by the columns of the period matrix
\begin{equation} \begin{pmatrix}
\int_{\alpha_1}\omega_1&\int_{\alpha_2}\omega_1&\int_{\beta_1}\omega_1&\int_{\beta_2} \omega_1 \\
\int_{\alpha_1}\omega_2&\int_{\alpha_2}\omega_2&\int_{\beta_1}\omega_2&\int_{\beta_2} \omega_2
\end{pmatrix}. \label{periodmatrix} \end{equation}
After updating the basis $\omega_1,\omega_2$ this becomes
\[ \begin{pmatrix} 1&0&\tau_1&\tau_2\\
            0&1&\tau_2&\tau_3 \end{pmatrix}. \]
Now $\tau_1,\tau_2,\tau_3$ only depend on the chosen symplectic basis.

If the Jacobian has non-trivial endomorphisms, this is reflected
in a \emph{Humbert} relation between $\tau_1$, $\tau_2$ and $\tau_3$, which
is of the form
\[A \tau_1+B\tau_2+C\tau_3+D(\tau_2^2-\tau_1\tau_3)+E=0\]
for some $A,B,C,D,E \in \mathbb{Z}$.
Inside the three-dimensional moduli space $\mathcal{M}_3$ of all curves of
genus $2$ or, what amounts to the same, the space of $\mathcal{A}_2$ of
(principally polarised) abelian surfaces, the curves that satisfy such a
Humbert relation form a \emph{Humbert surface} $H_{\Delta}$ of discriminant
\[\Delta:=B^2-4AC-4DE .\]
For a general curve belonging to $H_{\Delta}$ the endomorphisms of the
Jacobian form an order in the real quadratic field with discriminant $\Delta$,
and we say that the curve (or its Jacobian) has {\em real multiplication} by the order.

By numerical evaluation of the period matrix,
after picking numerical values for $u,x$, one can check that our curves
$C^{u,x}$ belong to the Humbert surface $H_8$:
\[ \Z[\sqrt{2}] \subseteq \operatorname{End}(\Jac(C^{u,x})). \]

However, one can do better than approximate verification. The
insight of Humbert \cite{Hu99} was that, for each $\Delta$, there is a
polynomial condition on the coefficients $a_1,a_2,\allowbreak a_3,a_4$ of the genus~2
curve
\[ Y^2=v(v-a_1)(v-a_2)(v-a_3)(v-a_4)\]
to be on the Humbert surface $H_{\Delta}$. For $\Delta=8$  the condition, already found
by Humbert \cite[p.~327]{Hu99}, reads
\[4a_1a_2a_3a_4((a_1+a_3)(a_2+a_4)-2a_1a_3-2a_2a_4)^2
=(a_2-a_4)^2(a_1-a_3)^2(a_1a_3+a_2a_4)^2.\]
Using this, it is trivial to verify the following statement.

\begin{theorem}
  The smooth curves $C^{u,x}$ admit real multiplication by $\sqrt{2}$\textup. In other words,
  the endomorphism ring of their Jacobian contains $\Z[\sqrt{2}]$.
\end{theorem}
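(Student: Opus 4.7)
The plan is to verify Humbert's criterion for $H_8$ directly on the curve family $C^{u,x}\colon Y^2 = H(v)$, where $H(v) = v(1-v)Q(v)$ with $Q(v) = (1-v)(1-u^2v)(1+uv)^2 + xv(1-uv)^2$. Since the criterion stated in the excerpt assumes the standard degree-$5$ form $Y^2 = v\prod_{i=1}^{4}(v-a_i)$, my first step is to apply a M\"obius transformation, for instance $v \mapsto v/(1-v)$ with $Y$ rescaled by an appropriate power of $(1-v)$, that fixes the Weierstrass point at $v=0$ and sends the one at $v=1$ to infinity. This recasts the curve as $Y^2 = c\cdot v\tilde{P}(v)$ for some $c \in \mathcal{L}(x)^{\times}$, where $\tilde{P} \in \mathcal{L}(x)[v]$ is a quartic whose roots $a_1,a_2,a_3,a_4$ are the M\"obius images of the roots of $Q(v)$. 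The scalar $c$ is irrelevant to Humbert's criterion.

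Humbert's identity is adapted to the pairing $(a_1,a_3)\sqcup(a_2,a_4)$ of the four roots: it implicitly groups $\{0,\infty\}$ with these two pairs. Under our chosen M\"obius transformation this amounts to pairing $\{v=0,v=1\}$ with two pairs among the roots of $Q$. I would next produce a factorization $\tilde{P}(v) = c'(v^2 - Sv + s)(v^2 - Pv + p)$ exhibiting the correct pairing, with $S,s,P,p \in \mathcal{L}(x)$ or a small extension thereof. Because the $\sqrt{2}$-endomorphism is expected to be defined over $\mathcal{L} = \mathbb{Q}(u,\sqrt{2u^2-2u}) = \mathbb{Q}(t)$, such a factorization should be available over $\mathcal{L}(x)$ itself; concretely I would try the three natural pairings of the roots of $Q$ and check which yields
\[
4sp\bigl(SP - 2s - 2p\bigr)^{2} = (S^{2}-4s)(P^{2}-4p)(s+p)^{2}.
\]
Once $(S,s,P,p)$ is in hand, this becomes a single polynomial identity in $\mathbb{Q}(t,x)$ which is routine for computer algebra.

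Should the explicit factorization prove elusive, I would fall back on a fully $S_4$-symmetric formulation: $C^{u,x}$ lies on $H_8$ iff Humbert's identity holds for at least one of the three pairings $(12|34)$, $(13|24)$, $(14|23)$. The product
\[
\prod_{\text{pairings}}\Bigl(4sp(SP-2s-2p)^{2} - (S^{2}-4s)(P^{2}-4p)(s+p)^{2}\Bigr)
\]
is symmetric in $a_1,\dots,a_4$, and so can be expressed as a polynomial in the elementary symmetric functions of the $a_i$, i.e.\ the coefficients of $\tilde{P}$. One then substitutes the explicit coefficients of $\tilde{P}$, which are polynomials in $u$ and $x$, and checks that this expression vanishes identically. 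This approach sidesteps the need to identify the RM-adapted pairing.

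The main obstacle is purely computational: either producing the explicit quadratic factorization of $\tilde{P}$ that exhibits the RM pairing, or expanding and simplifying the $S_4$-symmetric resolvent. In either route the degrees in $t$ and $x$ are substantial but bounded, and the final check is mechanical, consistent with the authors' remark that Humbert's criterion renders the result ``trivial to verify.'' A side benefit of the first route is that the pair of quadratic factors manifests the $\sqrt{2}$-correspondence on Weierstrass points at the level of the defining equation, providing a more structural perspective on why the real multiplication is present throughout the family.
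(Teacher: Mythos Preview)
Your approach—reduce the sextic to the normal form $Y^2=v\prod_{i=1}^4(v-a_i)$ via a M\"obius transformation and then check Humbert's polynomial identity for $\Delta=8$—is exactly the paper's; its entire proof is the sentence ``Using this, it is trivial to verify.'' One small caveat: sending $\{v=0,\,v=1\}$ to $\{0,\infty\}$ tacitly assumes this is one of the three RM-adapted pairs of Weierstrass points, so your $S_4$-resolvent tests only $3$ of the $15$ possible pairings of the six branch points; if none of those three vanishes you should repeat with a different pair sent to $\{0,\infty\}$ rather than conclude the criterion fails.
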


\section{Monodromy calculation} \label{monodromy}

We will now study in some more detail the geometry of the curves $C^{u,x}$.
For each value of $u$, $C^{u,x}$ becomes a 1-parameter family of curves, parametrized by $x$, forming a surface $S^u$. The singular members in this family are
\[x_1=u^2-6u+1+4(1-u)\sqrt{-u},\;\;\;x_2=0,\;\;\;
  x_3=u^2-6u+1-4(1-u)\sqrt{-u},\;\;\; x_4=\infty.\]
These are also the non-apparent singularities of the Picard--Fuchs equation $L^{\text{PF}}$,
the minimal differential equation in $\mathcal{L}(x)[\d / \d x]$ for the periods in~\eqref{periodmatrix}.
It has the expected order $2g = 4$.
Our aim is to find its monodromy representation.

As the local system of solutions to  $L^{\text{PF}}$ can be identified with
the local system of homologies $H_1(C^{u,x},\Z)$ tensored with $\C$, this
explicit form can be obtained by elementary topological means.
To achieve this,  we temporarily fix the value $u=1/2$.
Then the singularities are
\[x_1=-7/4+\sqrt{-2},\;\;\;x_2=0,\;\;\;x_3=-7/4-\sqrt{-2},\;\;\;x_4=\infty.\]
We chose a base point
\[ x_{\text{BP}}=1 \]
because all roots of $H$ are real at $u = 1/2$, $x = 1$. Sorted $v_1<\dots<v_6$ the roots are
\[
v_1 = -3-\sqrt{17}, \;\;
v_2 = \frac{3-\sqrt{17}}2,  \;\;
v_3 = 0,  \;\;
v_4 = 1,  \;\;
v_5 = -3+\sqrt{17},  \;\;
v_6 = \frac{3+\sqrt{17}}2.
\]
To visualize cycles on the hyperelliptic curve $C^{1/2,1}$ we use classical
branch-cuts: in Figure~\ref{HB} we drew oriented cycles $\alpha_1, \beta_1, \alpha_2, \beta_2 \in H_1(C^{1/2,1})$ in 
the $v$-plane. When crossing a branch-cut we change the sheet, and continue the cycle by drawing it as a dashed
line. We chose branch-cuts running from $v_1$ to $v_2$, from $v_3$ to $v_4$ and from $v_5$ to $v_6$.
\begin{figure}[ht]
\begin{center}
\begin{tikzpicture}
\def\sunit{2.4}
\node at (-2*\sunit,0) {$*$}; \node [below] at  (-2*\sunit,0) {$v_1$};
\node at (-1*\sunit,0) {$*$}; \node [below] at  (-1*\sunit,0) {$v_2$};
\node at (0,0) {$*$}; \node [below] at  (0,0) {$v_3$};
\node at (\sunit,0) {$*$}; \node [below] at  (\sunit,0) {$v_4$};
\node at (2*\sunit,0) {$*$}; \node [below] at  (2*\sunit,0) {$v_5$};
\node at (3*\sunit,0) {$*$}; \node [below] at  (3*\sunit,0) {$v_6$};
\draw[line width=1.5pt] (-2*\sunit,0) -- (-1*\sunit,0);
\draw[line width=1.5pt] (0*\sunit,0) -- (1*\sunit,0);
\draw[line width=1.5pt] (2*\sunit,0) -- (3*\sunit,0);
\draw[name path=alpha_1, decoration={ markings,
  mark=at position 0.45 with {\arrow[line width=1.2pt]{>}}},
  postaction={decorate}] (-0.72*\sunit,0) arc (0:360:{0.8*\sunit} and 0.4*\sunit);
\node [above] at (-1.88*\sunit,0.34*\sunit) {$\alpha_1$};
\draw[name path=beta_1, decoration={ markings,
  mark=at position 0.5 with {\arrow[line width=1.2pt]{>}}},
  postaction={decorate}] (0.28*\sunit,0) arc (0:180:{0.8*\sunit} and 0.4*\sunit);
\draw[name path=beta_1d, dashed] (0.28*\sunit,0) arc (0:-180:{0.8*\sunit} and 0.4*\sunit);
\node [above] at (-0.12*\sunit,0.34*\sunit) {$\beta_1$};
\draw[name path=alpha_2, decoration={ markings,
  mark=at position 0.5 with {\arrow[line width=1.2pt]{>}}},
  postaction={decorate}] (2.28*\sunit,0) arc (0:180:{0.8*\sunit} and 0.4*\sunit);
\draw[name path=alpha_2d, dashed] (2.28*\sunit,0) arc (0:-180:{0.8*\sunit} and 0.4*\sunit);
\node [above] at (1.12*\sunit,0.34*\sunit) {$\alpha_2$};
\draw[name path=beta_2, decoration={ markings,
  mark=at position 0.3 with {\arrow[line width=1.2pt]{>}}},
  postaction={decorate}] (3.28*\sunit,0) arc (0:360:{0.8*\sunit} and 0.4*\sunit);
\node [above] at (2.88*\sunit,0.34*\sunit) {$\beta_2$};
\fill [name intersections={of=alpha_1 and beta_1, by={p1}}] (p1) circle (2.1pt);  \node [above] at (p1) {$\vphantom|p_1$};
\fill [name intersections={of=alpha_2 and beta_2, by={p2}}] (p2) circle (2.1pt);  \node [above] at (p2) {$\vphantom|p_2$};
\end{tikzpicture}
\end{center}
\caption{A symplectic homology basis $\alpha_1, \beta_1, \alpha_2, \beta_2 \in H_1(C^{1/2,1}).$}
\label{HB}
\end{figure}
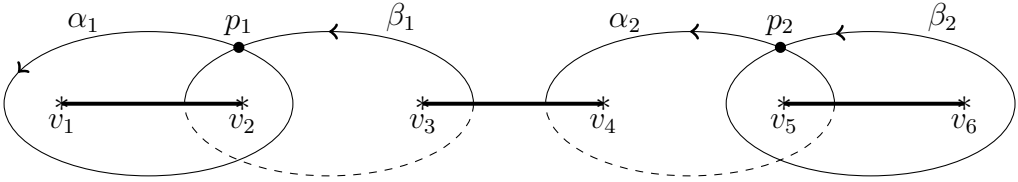

Here the cycle $\alpha_1$ runs counter-clockwise around the cut $v_1 v_2$.
It is completely on the `upper sheet'. The cycle $\beta_1$ encircles $v_2$
and $v_3$ counter-clockwise. This cycle hits the cuts $v_1 v_2$ and $v_3 v_4$,
so when drawing it, the lower part of the cycles is dashed and runs on the
`lower sheet' of the double cover. The cycles $\alpha_1$ and $\beta_1$ intersect
in a single point $p_1$. One has
\[ 1 = \alpha_1 \cdot \beta_1 =-\beta_1 \cdot \alpha_1,\]
as the tangent to $\alpha_1$ at the intersection points $p_1$ has to turn
counter-clockwise towards the tangent of $\beta_1$ at $p_1$.
Similarly, we take cycles $\alpha_2$ and $\beta_2$, where $\alpha_2$
encircles $v_4$ and $v_5$ (so hits two cuts and has lower half-dashed)
and $\beta_2$, which encircles the cut $v_5 v_6$, always counter-clockwise.
Thus we also have
\[ 1 = \alpha_2 \cdot \beta_2= -\beta_2 \cdot \alpha_2 .\]
All other intersection products between the cycles are zero, so that
$\alpha_1, \beta_1, \alpha_2, \beta_2$ make up a symplectic basis for the
homology group $H_1(C^{1/2,1},\Z)$.\\

\begin{figure}[ht]
\begin{center}
\begin{tikzpicture}
\def\sunit{1.8}
\fill (-1*\sunit,1*\sunit) circle (2.1pt); \node [right] at (-1*\sunit,0.9*\sunit) {$x_1$};
\fill (0,0) circle (2.1pt); \node [right] at (0,0) {$x_2$};
\fill (-1*\sunit,-1*\sunit) circle (2.1pt); \node [right] at (-1*\sunit,-0.9*\sunit) {$x_3$};
\fill (1.5*\sunit,0) circle (2.1pt); \node [right] at (1.5*\sunit,0) {$x_{\text{BP}}$};
\draw[decoration={markings, mark=at position 0.3 with {\arrow[line width=1.2pt]{>}}}, postaction={decorate}]
 (1.5*\sunit,0) .. controls (-1.0*\sunit,1.35*\sunit) .. (-1.2*\sunit,1.1*\sunit) .. controls (-1.25*\sunit,0.8*\sunit) .. (1.5*\sunit,0);
\draw[decoration={markings, mark=at position 0.3 with {\arrow[line width=1.2pt]{>}}}, postaction={decorate}]
 (1.5*\sunit,0) .. controls (-0.1*\sunit,0.2*\sunit) .. (-0.15*\sunit,0) .. controls (-0.1*\sunit,-0.2*\sunit) .. (1.5*\sunit,0);
\draw[decoration={markings, mark=at position 0.3 with {\arrow[line width=1.2pt]{>}}}, postaction={decorate}]
 (1.5*\sunit,0) .. controls (-1.25*\sunit,-0.8*\sunit) .. (-1.2*\sunit,-1.1*\sunit) .. controls (-1.0*\sunit,-1.35*\sunit) .. (1.5*\sunit,0);
\draw[decoration={markings, mark=at position 0.5 with {\arrow[line width=1.2pt]{>}}}, postaction={decorate}]
 (1.5*\sunit,0) arc (360:0:{2*\sunit} and 1.35*\sunit);
\node [above] at (0.55*\sunit,0.5*\sunit) {$\ell_1$};
\node [left] at (-0.1*\sunit,-0.1*\sunit) {$\ell_2$};
\node [below] at (0.55*\sunit,-0.5*\sunit) {$\ell_3$};
\node [right] at (-2.3*\sunit,0.5*\sunit) {$\ell_4$};
\end{tikzpicture}
\end{center}
\caption{Loops around the singularities for $u=1/2$}
\label{Pos0}
\end{figure}
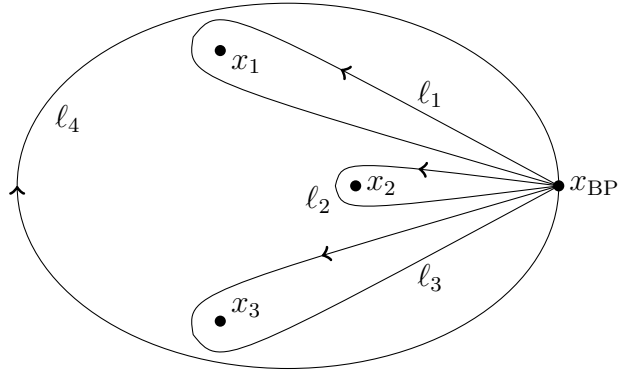

Next we examine how continuously changing $x$ (while fixing $u = 1/2$) impacts Figure~\ref{HB}.
We choose loops $\ell_1, \ell_2, \ell_3, \ell_4$
in the $x$-plane that start at $x_{\text{BP}}=1$ and loop around $x_1=-7/4+\sqrt{-2}$, $x_2=0$, $x_3=-7/4-\sqrt{-2}$, $x_4 = \infty$ respectively, see Figure~\ref{Pos0}.
Following such a loop, the points $v_1,\ldots,v_6$ in Figure~\ref{HB} move, dragging the paths $\alpha_1, \beta_1, \alpha_2, \beta_2$ along with them. When we return to $x_{\text{BP}}$
at the end of the loop, we have a permutation of the original roots $v_1,\ldots,v_6$, and the dragged paths form a new symplectic basis.
The change of basis matrix, between the new and the original basis, is a
symplectic matrix $M_i \in \operatorname{Sp}_4(\Z) \subset \operatorname{GL}_4(\Z)$ for each $\ell_i$.
This gives a homomorphism (the  {\em monodromy representation})
\[ \rho\colon \pi_1(\mathbb P^1\setminus\{x_1,x_2,x_3,x_4\}, x_{\text{BP}}) \to  \operatorname{Sp}_4(\Z)  \cong \operatorname{Aut}(H_1(C^{1/2,1},\Z)), \]
with $\rho( \ell_i )  = M_i$.
As the composed path $\ell_1 \ell_2 \ell_3 \ell_4$ is contractible to a point we have 
\begin{equation}
M_1 M_2 M_3 M_4=\operatorname{Id}.
\label{product1}
\end{equation}

\begin{prop}\label{monodromymatrices}
With the basis for $H_1(C^{1/2,1},\Z)$ in Figure~\textup{\ref{HB}}
and generators $\ell_1,\ldots,\ell_4$ for $\pi_1(\C\setminus\{x_1,x_2,x_3,x_4\},x_{\textup{BP}})$ in Figure~\textup{\ref{Pos0}} we have%
\footnote{This monodromy is close to the type that is classified in \cite[Theorem~3.1]{BR13}, with a subtle difference. The monodromy in \cite{BR13} is irreducible
over $\mathbb{C}$, whereas the monodromy here is irreducible over~$\mathbb{Q}$ but reducible over $\mathbb{C}$, see Section~\ref{mono-RM}.  This subtle difference
is important because reducibility over $\mathbb{C}$ is what makes it possible for a hyperelliptic integral to satisfy a differential equation of order~2 instead of the expected order~4.
It also causes the real multiplication discussed in Section~\ref{RM-check}.}
\begin{alignat*}{2}
M_1 &:= \begin{pmatrix}		
      0&-1&1&1\\
      2&2&-1&-2\\
      2&1&0&-2\\
      -1 & -1 & 1 & 2
    \end{pmatrix}, &\quad
M_2 &:= \begin{pmatrix}
      1&-1&0&0\\
      0&1&0&0\\
      0&0&1&-2\\
      0&0&0&1
    \end{pmatrix},
\displaybreak[2]\\
M_3 &:=\begin{pmatrix}
      2&-1&1&-1\\
      2&0&1&-2\\
      2&-1&2&-2\\
      1&-1&1&0
     \end{pmatrix}, &\quad
M_4 &:= \begin{pmatrix}
      -1&0&0&0\\
      -2&-1&0&0\\
      0&0&-1&0\\
      0&0&-1&-1
    \end{pmatrix}.
\end{alignat*}  
\end{prop}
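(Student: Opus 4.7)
My strategy is to compute each $M_i$ by identifying, for each singular value $x_i$, the local monodromy on $H_1(C^{1/2,1}, \Z)$ induced by the braid of the six branch points $v_1, \ldots, v_6$ of $H(v,x)$ as $x$ traces the loop $\ell_i$. Since the solution space of $L^{\text{PF}}$ is canonically identified with the fiberwise first (co)homology, monodromy on periods is exactly the induced action on the cycles of Figure~\ref{HB}, and each $M_i$ can be read off via the Birman--Hilden / Picard--Lefschetz correspondence between the surface braid group on the branch points and the hyperelliptic mapping class group.

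Concretely, I would first factor $H(v, x_i)$ at each singular value to locate the colliding pairs. At $x_2 = 0$, $H = v(1-v)^2(1-v/4)(1+v/2)^2$ has two nodes, at $v=-2$ and $v=1$. The first is the generic collision of two moving roots $v_1, v_2 \approx -2 \pm \tfrac{8}{3}\sqrt{x}$, so a $2\pi$ loop of $x$ exchanges them (a half-twist), yielding one Dehn twist about $\alpha_1$. The second is non-generic because $v_4 = 1$ is a permanent root of $H$ (from the factor $1-v$), while $v_5 \approx 1 + \tfrac{4}{27}x$ is single-valued in $x$; hence a $2\pi$ loop of $x$ produces a \emph{full} rotation of $v_5$ around the fixed $v_4$ rather than a half-twist, i.e.\ the \emph{square} of a Dehn twist about $\alpha_2$. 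This explains and reproduces the stated $M_2$: the column $\beta_1 \mapsto \beta_1 - \alpha_1$ is one Picard--Lefschetz reflection, and $\beta_2 \mapsto \beta_2 - 2\alpha_2$ is exactly the squared Dehn twist. For $\ell_1$ and $\ell_3$ (the complex singular points) one performs the analogous root-collision analysis and then parallel-transports the local vanishing cycle from a small neighborhood of $x_i$ back to $x_{\text{BP}} = 1$ along the path drawn in Figure~\ref{Pos0}. For $\ell_4$ one either analyzes the degeneration at infinity directly (a branch point of $H$ escapes to $\infty$, which accounts for the $-\operatorname{Id}$ hyperelliptic-involution factor visible in $M_4$) or, more economically, uses the relation $M_1 M_2 M_3 M_4 = \operatorname{Id}$ from~\eqref{product1} to determine $M_4$ from the other three.

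The principal obstacle is the transport step at the complex points $x_1, x_3 = -\tfrac{7}{4} \pm \sqrt{-2}$: here one cannot just sort roots on the real line, so one must analytically continue the six roots of $H$ along a concrete path from $x_{\text{BP}}$ to a neighborhood of each $x_i$, identify which two coalesce, and then re-express the resulting local vanishing cycle as an integer combination of $\alpha_1, \beta_1, \alpha_2, \beta_2$ in the original basis of Figure~\ref{HB}. The nontrivial mixing produced by this transport is precisely what makes $M_1$ and $M_3$ fail to be simple transvections, and it is the most delicate part of the bookkeeping. I would run the topological calculation in parallel with a high-precision numerical check: analytically continue the period matrix~\eqref{periodmatrix} around each $\ell_i$, round the resulting change-of-basis to integers, and verify symplecticity, integrality, agreement with the claimed $M_i$, and the composition identity~\eqref{product1}.
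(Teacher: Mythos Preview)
Your approach is essentially the paper's: Picard--Lefschetz theory applied to the vanishing cycles determined by the braid of the six branch points, with $M_4$ recovered from the product relation~\eqref{product1} and everything cross-checked by numerical analytic continuation of the period matrix. Your treatment of $M_2$ --- one transvection in $\alpha_1$ from a generic collision of $v_1,v_2$, and the \emph{square} of a transvection in $\alpha_2$ because $v_4=1$ is a fixed root of $H$ while $v_5$ makes a full turn around it --- matches the paper's analysis exactly, including the explanation for the coefficient $2$ in the Jordan form.

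There is, however, a misconception in your account of $M_1$ and $M_3$. You attribute their failure to be simple transvections to ``the nontrivial mixing produced by this transport'' along the path from $x_{\text{BP}}$ to $x_i$. But transport is conjugation, and a conjugate of a transvection is still a transvection with the same Jordan type; parallel transport cannot by itself promote a rank-one unipotent to something with two $2\times 2$ blocks. The actual reason, which the paper works out explicitly, is that at each of $x_1$ and $x_3$ there are \emph{two} simultaneous $A_1$-degenerations: at $x_1$ the pairs $(v_1,v_6)$ and $(v_2,v_5)$ both coalesce, producing two disjoint vanishing cycles $\delta_1=-(\beta_1+\alpha_2)$ and $\delta_2=-\alpha_1+\beta_1+\alpha_2-\beta_2$, and $M_1=T_{\delta_1}T_{\delta_2}$ is the product of the two commuting symplectic reflections. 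Likewise $M_3=T_{\delta_3}T_{\delta_4}$ with $\delta_3=\alpha_1+\beta_1+\alpha_2+\beta_2$ and $\delta_4=\beta_1+\alpha_2$. The transport step is still needed, but only to express each $\delta_j$ in the basis $\alpha_1,\beta_1,\alpha_2,\beta_2$ of Figure~\ref{HB}; it is not the source of the extra Jordan block. Once you correct this expectation, your plan coincides with the paper's.
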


\begin{proof}
To compute these monodromy matrices we
use Picard--Lefsch\-etz theory \cite{HZ77}, \cite{Va02}, which tells us that $M_i$
is determined by vanishing cycles. In general, when a curve
acquires an $A_1$-singularity at a certain value of the parameter, there is a
{\em vanishing cycle $\delta$}, that contracts to a point.
The monodromy is geometrically a Dehn twist along this cycle. On the
level of homology, the monodromy transformation $T$ is a
{\em symplectic reflection} in~$\delta$:
\begin{equation}
	T_{\delta}\colon \gamma \mapsto \gamma -(\delta \cdot \gamma) \delta.   \label{Tdelta}
\end{equation}

A \texttt{Maple}-animation showed that if $x$ follows loop $\ell_1$ around $x_1$
then the roots $v_1,\ldots,v_6$ of $H$ move as indicated in Figure~\ref{Animation1}.
There are {\em two} $A_1$-singularities at $x = x_1$, and two disjoint {\em vanishing cycles} $\delta_1, \delta_2$.
In such cases, the transformations $T_{\delta_1}$ and $T_{\delta_2}$ commute.  We will compute them next; the
monodromy will be their product.


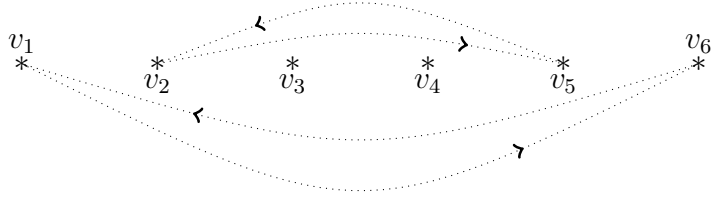
\begin{figure}[ht]
\begin{center}
\begin{tikzpicture}
\def\sunit{1.8}
\node at (-2*\sunit,0) {$*$}; \node [above] at  (-2*\sunit,0) {$v_1$};
\node at (-1*\sunit,0) {$*$}; \node [below] at  (-1*\sunit,0) {$v_2$};
\node at (0,0) {$*$}; \node [below] at  (0,0) {$v_3$};
\node at (\sunit,0) {$*$}; \node [below] at  (\sunit,0) {$v_4$};
\node at (2*\sunit,0) {$*$}; \node [below] at  (2*\sunit,0) {$v_5$};
\node at (3*\sunit,0) {$*$}; \node [above] at  (3*\sunit,0) {$v_6$};
\node at (-3*\sunit,0) {\phantom.};
\node at (4*\sunit,0) {\phantom.};
\draw[dotted, decoration={ markings,
  mark=at position 0.375 with {\arrow[line width=1.2pt]{>}},
  mark=at position 0.875 with {\arrow[line width=1.2pt]{>}}},
  postaction={decorate}]
 (-1*\sunit,0) .. controls (0.5*\sunit,0.3*\sunit) .. (2*\sunit,0) .. controls (0.5*\sunit,0.6*\sunit) .. (-1*\sunit,0);
\draw[dotted, decoration={ markings,
  mark=at position 0.375 with {\arrow[line width=1.2pt]{>}},
  mark=at position 0.875 with {\arrow[line width=1.2pt]{>}}},
  postaction={decorate}]
 (-2*\sunit,0) .. controls (0.5*\sunit,-1.25*\sunit) .. (3*\sunit,0) .. controls (0.5*\sunit,-0.75*\sunit) .. (-2*\sunit,0);
\end{tikzpicture}
\end{center}
\caption{Motion of $v_1,\ldots,v_6$ when $x$ follows $\ell_{{1}}$}
\label{Animation1}
\end{figure}

\begin{figure}[ht]
\begin{center}
\begin{tikzpicture}
\def\sunit{2.4}
\node at (-2*\sunit,0) {$*$}; \node [above] at  (-1.85*\sunit,0) {$v_1$};
\node at (-1*\sunit,0) {$*$}; \node [below] at  (-1.15*\sunit,-0) {$v_2$};
\node at (0,0) {$*$}; \node [below] at  (0,0) {$v_3$};
\node at (\sunit,0) {$*$}; \node [below] at  (\sunit,0) {$v_4$};
\node at (2*\sunit,0) {$*$}; \node [below] at  (2.15*\sunit,0) {$v_5$};
\node at (3*\sunit,0) {$*$}; \node [above] at  (2.85*\sunit,0) {$v_6$};
\draw[line width=1.5pt] (-2*\sunit,0) -- (-1*\sunit,0);
\draw[line width=1.5pt] (0*\sunit,0) -- (1*\sunit,0);
\draw[line width=1.5pt] (2*\sunit,0) -- (3*\sunit,0);
\draw[name path=alpha_1, decoration={ markings,
  mark=at position 0.45 with {\arrow[line width=1.2pt]{>}}},
  postaction={decorate}] (-0.72*\sunit,0) arc (0:360:{0.8*\sunit} and 0.4*\sunit);
\node [above] at (-1.88*\sunit,0.34*\sunit) {$\alpha_1$};
\draw[name path=beta_1, decoration={ markings,
  mark=at position 0.87 with {\arrow[line width=1.2pt]{>}}},
  postaction={decorate}] (0.28*\sunit,0) arc (0:180:{0.8*\sunit} and 0.4*\sunit);
\draw[name path=beta_1d, dashed] (0.28*\sunit,0) arc (0:-180:{0.8*\sunit} and 0.4*\sunit);
\node [above] at (-0.88*\sunit,0.34*\sunit) {$\beta_1$};
\draw[name path=alpha_2, decoration={ markings,
  mark=at position 0.17 with {\arrow[line width=1.2pt]{>}}},
  postaction={decorate}] (2.28*\sunit,0) arc (0:180:{0.8*\sunit} and 0.4*\sunit);
\draw[name path=alpha_2d, dashed] (2.28*\sunit,0) arc (0:-180:{0.8*\sunit} and 0.4*\sunit);
\node [above] at (1.88*\sunit,0.34*\sunit) {$\alpha_2$};
\draw[name path=beta_2, decoration={ markings,
  mark=at position 0.1 with {\arrow[line width=1.2pt]{>}}},
  postaction={decorate}] (3.28*\sunit,0) arc (0:360:{0.8*\sunit} and 0.4*\sunit);
\node [above] at (2.88*\sunit,0.34*\sunit) {$\beta_2$};
%
\draw[dotted] (-2*\sunit,0) .. controls (-1*\sunit,-0.8*\sunit) and (2*\sunit,-0.8*\sunit) .. (3*\sunit,0);
\draw[name path=delta_1, rounded corners=4.5pt] (-1.9*\sunit,0) -- (-2*\sunit,0.1*\sunit) -- (-2.1*\sunit,0) .. controls (-1.1*\sunit,-0.88*\sunit) and (2.1*\sunit,-0.88*\sunit) .. (3.1*\sunit,0) -- (3*\sunit,0.1*\sunit) -- (2.9*\sunit,0);
\draw[dashed] (-1.9*\sunit,0) .. controls (-0.9*\sunit,-0.72*\sunit) and (1.9*\sunit,-0.72*\sunit) .. (2.9*\sunit,0);
\fill [name intersections={of=alpha_1 and delta_1, by={q1}}] (q1) circle (2.1pt);
\fill [name intersections={of=beta_2 and delta_1, by={q2}}] (q2) circle (2.1pt);
\node [below] at (-0.5*\sunit,-0.57*\sunit) {$\delta_{{1}}$};
\draw[dotted] (-1*\sunit,0) .. controls (-0.5*\sunit,0.6*\sunit) and (1.5*\sunit,0.6*\sunit) .. (2*\sunit,0);
\draw[name path=delta_2] (-1.1*\sunit,0) .. controls (-0.6*\sunit,0.68*\sunit) and (1.6*\sunit,0.68*\sunit) .. (2.1*\sunit,0);
\draw[dashed, rounded corners=4.5pt] (-1.1*\sunit,0) -- (-1*\sunit,-0.1*\sunit) -- (-0.9*\sunit,0) .. controls (-0.4*\sunit,0.52*\sunit) and (1.4*\sunit,0.52*\sunit) .. (1.9*\sunit,0) -- (2*\sunit,-0.1*\sunit) -- (2.1*\sunit,0);
\fill [name intersections={of=alpha_1 and delta_2, by={q3}}] (q3) circle (2.1pt);
\fill [name intersections={of=beta_1 and delta_2, by={q4}}] (q4) circle (2.1pt);
\fill [name intersections={of=alpha_2 and delta_2, by={q5}}] (q5) circle (2.1pt);
\fill [name intersections={of=beta_2 and delta_2, by={q6}}] (q6) circle (2.1pt);
\node [above] at (0.2*\sunit,0.49*\sunit) {$\delta_{{2}}$};
\end{tikzpicture}
\end{center}
\caption{Vanishing cycles when $x$ follows $\ell_1$}  
\label{VS-3}
\end{figure}

Figure~\ref{Animation1} shows $v_1$ and $v_6$ interchanging in the lower part of the
plane, in a {\em counter clockwise}\footnote{All branch-point swaps in this paper happen to be counter clockwise.
(For clockwise swaps, replace the minus sign in~(\ref{Tdelta}) by a plus sign, this inverts $T_{\delta}$.)
Unlike swaps, a vanishing cycle $\delta$ does not have a natural orientation. Indeed, $T_{\delta} = T_{-\delta}$ so
a clockwise treatment of $\delta$ produces the same monodromy as our counter clockwise treatment.}
manner (i.e. the point that travels left moves {\em over} the point that travels right).
Drawing an auxiliary arc in the lower part of the plane, connecting $v_1$ and $v_6$,
gives the vanishing cycle $\delta_{{1}}$ shown in Figure~\ref{VS-3}.
It encircles $v_1$ and $v_6$ and is half-dashed, as it intersects the cuts $v_1 v_2$ and $v_5 v_6$.
Figure~\ref{Animation1} also shows $v_2$ and $v_5$ interchanging in the upper part of the plane.
That corresponds to the vanishing cycle $\delta_{{2}}$ in Figure~\ref{VS-3}.

Careful comparison shows that $\delta_{{1}} =-(\beta_1 + \alpha_2)$ in the homology.
To ensure correctness, we prove this formula by checking
that $\delta_1 \cdot \gamma = -(\beta_1+\alpha_2) \cdot \gamma$
for each $\gamma$ in our basis $\alpha_1,\beta_1,\alpha_2,\beta_2$.
Next we apply
\[ T_{\delta_1}\colon \gamma \mapsto \gamma -(\delta_1 \cdot \gamma) \delta_1\]
to each element in our basis, resulting in the matrix
\[
  T_{\delta_1}=\begin{pmatrix}
      1&0&0&0\\
      1&1&0&-1\\
      1&0&1&-1\\
      0&0&0&1
   \end{pmatrix}.
\]
Similarly we find and prove $\delta_2  = -\alpha_1 +\beta_1 + \alpha_2-\beta_2$ with the corresponding symplectic reflection
\[
  T_{\delta_{{2}}}=\begin{pmatrix}
      0&-1&1&1\\
      1&2&-1&-1\\
      1&1&0&-1\\
      -1&-1&1&2
   \end{pmatrix}.
\]
We verify that $T_{\delta_{{1}}}$ and $T_{\delta_{{2}}}$ commute, as they should, and obtain the monodromy
by taking their product
\[
 M_{{1}} := T_{\delta_{{1}}} T_{\delta_{{2}}} = \begin{pmatrix}
      0&-1&1&1\\
      2&2&-1&-2\\
      2&1&0&-2\\
      -1&-1&1&2
   \end{pmatrix}.
\]


The first step towards finding $M_2$ is Figure~\ref{A2}. This time $v_1$ and $v_2$ change
place, with $v_2$ going {\em over} $v_1$. At the same time $v_5$ makes a complete counterclockwise turn around $v_4$.%
\footnote{The degeneration of $C^{u,x}$ when $x$ approaches $x_2$
is slightly different from the other cases.
The local monodromy around $x_2$ is the product of a
symplectic reflection and the \emph{square} of another
(commuting with the first) symplectic reflection. This
leads then to a coefficient~$2$ in the Jordan form.
For $x=x_2=0$ the two double factors appearing in the equation
$(v-1)^2$ and $(uv+1)^2$ behave differently: the point
$(x=0,y=0,v=1)$ is a \emph{singular} point of the surface~$S^u$,
whereas the point $(x=0,y=0,v=-1/u)$ is a non-singular point.
In local coordinates putting the singularity at the
origin, the first point has normal form
$y^2=v^2+x^2$ (singularity of the surface),
whereas the second  $y^2=v^2+x$ (smooth point on the surface).
Because we have $x^2$ in the first normal form, the
local monodromy is the \emph{square} of a symplectic
reflection.}
The auxillary arc connecting $v_1$ and $v_2$ can be taken to be the
branch-cut $v_1 v_2$, so the vanishing cycle is simply $\alpha_1$.

\begin{figure}[ht]
\begin{center}
\begin{tikzpicture}
\def\sunit{1.8}
\node at (-2*\sunit,0) {$*$}; \node [above] at  (-2*\sunit,0) {$v_1$};
\node at (-1*\sunit,0) {$*$}; \node [right] at  (-1*\sunit,0) {$v_2$};
\node at (0,0) {$*$}; \node [below] at  (0,0) {$v_3$};
\node at (\sunit,0) {$*$}; \node [right] at  (\sunit,0) {$v_4$};
\node at (2*\sunit,0) {$*$}; \node [below] at  (2*\sunit,0) {$v_5$};
\node at (3*\sunit,0) {$*$}; \node [left] at  (3*\sunit,0) {$v_6$};
\draw[dotted, decoration={ markings,
  mark=at position 0.375 with {\arrow[line width=1.2pt]{>}},
  mark=at position 0.875 with {\arrow[line width=1.2pt]{>}}},
  postaction={decorate}]
 (-2*\sunit,0) .. controls (-1.5*\sunit,-0.3*\sunit) .. (-1*\sunit,0) .. controls (-1.5*\sunit,0.3*\sunit) .. (-2*\sunit,0);
\draw[dotted, decoration={ markings,
  mark=at position 0.75 with {\arrow[line width=1.2pt]{>}}},
  postaction={decorate}]
 (2*\sunit,0) .. controls (.1*\sunit,0.9*\sunit) and (.1*\sunit,-0.9*\sunit) .. (2*\sunit,0);
\draw[dashed, decoration={ markings,
  mark=at position 0.75 with {\arrow[line width=1.2pt]{>}}},
  postaction={decorate}]
 (3*\sunit,0) .. controls (4.2*\sunit,-0.75*\sunit) and (4.2*\sunit,0.75*\sunit) .. (3*\sunit,0);
\end{tikzpicture}
\end{center}
\vskip-\baselineskip
\caption{Motion of $v_1,\ldots,v_6$ when $x$ follows $\ell_2$}
\label{A2}
\end{figure}
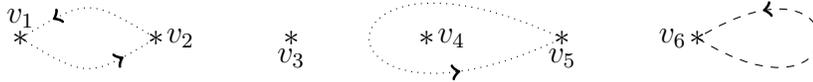

The matrix of the map $T_{\alpha_1}\colon \gamma \mapsto \gamma-(\alpha_1\cdot \gamma) \alpha_1$
is
\[T_{\alpha_1}=\begin{pmatrix}
      1&-1&0&0\\
      0&1&0&0\\
      0&0&1&0\\
      0&0&0&1
   \end{pmatrix}.
\]
The loop of $v_5$ around $v_4$ can be seen as the {\em square} of
the interchanging $v_4, v_5$\,---\,this is just the square of $T_{\alpha_2}$!
\[ T_{\alpha_2} = \begin{pmatrix}
      1&0&0&0\\
      0&1&0&0\\
      0&0&1&-1\\
      0&0&0&1
   \end{pmatrix}.
\]
Thus the monodromy $M_2$ corresponding to the loop $\ell_2$ is
\[
 M_2 :=T_{\alpha_1} T^2_{\alpha_2} = \begin{pmatrix}
      1&-1&0&0\\
      0&1&0&0\\
      0&0&1&-2\\
      0&0&0&1
   \end{pmatrix}.
\]


Loop $\ell_3$ is very similar to loop $\ell_1$, the motion of $v_1,\ldots,v_6$ differs in a subtle way, as shown in Figure~\ref{Animation3}.
There are again two $A_1$-singularities, and two disjoint vanishing cycles $\delta_{{3}}, \delta_{{4}}$.

\begin{figure}[ht]
\begin{center}
\begin{tikzpicture}
\def\sunit{1.8}
\node at (-2*\sunit,0) {$*$}; \node [below] at  (-2*\sunit,0) {$v_1$};
\node at (-1*\sunit,0) {$*$}; \node [above] at  (-1*\sunit,0) {$v_2$};
\node at (0,0) {$*$}; \node [above] at  (0,0) {$v_3$};
\node at (\sunit,0) {$*$}; \node [above] at  (\sunit,0) {$v_4$};
\node at (2*\sunit,0) {$*$}; \node [above] at  (2*\sunit,0) {$v_5$};
\node at (3*\sunit,0) {$*$}; \node [below] at  (3*\sunit,0) {$v_6$};
\draw[dotted, decoration={ markings,
  mark=at position 0.375 with {\arrow[line width=1.2pt]{>}},
  mark=at position 0.875 with {\arrow[line width=1.2pt]{>}}},
  postaction={decorate}]
 (-1*\sunit,0) .. controls (0.5*\sunit,-0.6*\sunit) .. (2*\sunit,0) .. controls (0.5*\sunit,-0.3*\sunit) .. (-1*\sunit,0);
\draw[dotted, decoration={ markings,
  mark=at position 0.375 with {\arrow[line width=1.2pt]{>}},
  mark=at position 0.875 with {\arrow[line width=1.2pt]{>}}},
  postaction={decorate}]
 (-2*\sunit,0) .. controls (0.5*\sunit,0.75*\sunit) .. (3*\sunit,0) .. controls (0.5*\sunit,1.25*\sunit) .. (-2*\sunit,0);
\end{tikzpicture}
\end{center}
\caption{Motion of $v_1,\ldots,v_6$ when $x$ follows $\ell_{{3}}$}
\label{Animation3}
\end{figure}
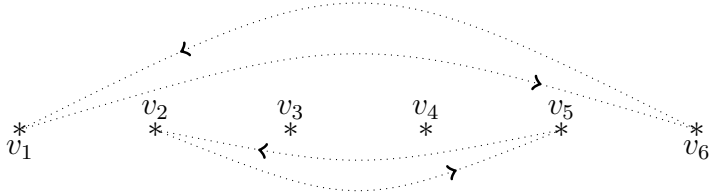

Figure~\ref{Animation3} shows $v_1$ and $v_6$ changing place. They move in the upper part of the plane and $v_6$ moves {\em over} $v_1$.
At the same time, $v_2$ and $v_5$ change place as well (again, $v_5$ moves {\em over}~$v_2$).
If $x$ approaches the $x_3$, then $v_1$ and $v_6$ coalesce. Now draw an auxiliary arc in the upper part of the plane, connecting
$v_1$ and $v_6$. The vanishing cycle $\delta_{{4}}$ is then the cycle that
encircles this arc (counter-clockwise) at small distance (see Figure~\ref{VS-1}).

\begin{figure}[ht]
\begin{center}
\begin{tikzpicture}
\def\sunit{2.4}
\node at (-2*\sunit,0) {$*$}; \node [below] at  (-1.85*\sunit,0) {$v_1$};
\node at (-1*\sunit,0) {$*$}; \node [below] at  (-1.15*\sunit,0) {$v_2$};
\node at (0,0) {$*$}; \node [below] at  (0,0) {$v_3$};
\node at (\sunit,0) {$*$}; \node [below] at  (\sunit,0) {$v_4$};
\node at (2*\sunit,0) {$*$}; \node [below] at  (2.15*\sunit,0) {$v_5$};
\node at (3*\sunit,0) {$*$}; \node [below] at  (2.85*\sunit,0) {$v_6$};
\draw[line width=1.5pt] (-2*\sunit,0) -- (-1*\sunit,0);
\draw[line width=1.5pt] (0*\sunit,0) -- (1*\sunit,0);
\draw[line width=1.5pt] (2*\sunit,0) -- (3*\sunit,0);
\draw[name path=alpha_1, decoration={ markings,
  mark=at position 0.45 with {\arrow[line width=1.2pt]{>}}},
  postaction={decorate}] (-0.72*\sunit,0) arc (0:360:{0.8*\sunit} and 0.4*\sunit);
\node [above] at (-1.88*\sunit,0.34*\sunit) {$\alpha_1$};
\draw[name path=beta_1, decoration={ markings,
  mark=at position 0.25 with {\arrow[line width=1.2pt]{>}}},
  postaction={decorate}] (0.28*\sunit,0) arc (0:180:{0.8*\sunit} and 0.4*\sunit);
\draw[name path=beta_1d, dashed] (0.28*\sunit,0) arc (0:-180:{0.8*\sunit} and 0.4*\sunit);
\node [above] at (0.2*\sunit,0.24*\sunit) {$\beta_1$};
\draw[name path=alpha_2, decoration={ markings,
  mark=at position 0.72 with {\arrow[line width=1.2pt]{>}}},
  postaction={decorate}] (2.28*\sunit,0) arc (0:180:{0.8*\sunit} and 0.4*\sunit);
\draw[name path=alpha_2d, dashed] (2.28*\sunit,0) arc (0:-180:{0.8*\sunit} and 0.4*\sunit);
\node [above] at (0.8*\sunit,0.24*\sunit) {$\alpha_2$};
\draw[name path=beta_2, decoration={ markings,
  mark=at position 0.1 with {\arrow[line width=1.2pt]{>}}},
  postaction={decorate}] (3.28*\sunit,0) arc (0:360:{0.8*\sunit} and 0.4*\sunit);
\node [above] at (2.88*\sunit,0.34*\sunit) {$\beta_2$};
%
\draw[dotted] (-2*\sunit,0) .. controls (-1*\sunit,0.8*\sunit) and (2*\sunit,0.8*\sunit) .. (3*\sunit,0);
\draw[name path=delta_1, rounded corners=4.5pt] (-1.9*\sunit,0) -- (-2*\sunit,-0.1*\sunit) -- (-2.1*\sunit,0) .. controls (-1.1*\sunit,0.88*\sunit) and (2.1*\sunit,0.88*\sunit) .. (3.1*\sunit,0) -- (3*\sunit,-0.1*\sunit) -- (2.9*\sunit,0);
\draw[dashed] (-1.9*\sunit,0) .. controls (-0.9*\sunit,0.72*\sunit) and (1.9*\sunit,0.72*\sunit) .. (2.9*\sunit,0);
\fill [name intersections={of=alpha_1 and delta_1, by={q1}}] (q1) circle (2.1pt);
\fill [name intersections={of=beta_2 and delta_1, by={q2}}] (q2) circle (2.1pt);
\node [above] at (-0.5*\sunit,0.57*\sunit) {$\delta_{{4}}$};
\draw[dotted] (-1*\sunit,0) .. controls (-0.5*\sunit,-0.6*\sunit) and (1.5*\sunit,-0.6*\sunit) .. (2*\sunit,0);
\draw[dashed, name path=delta_3d] (-1.1*\sunit,0) .. controls (-0.6*\sunit,-0.68*\sunit) and (1.6*\sunit,-0.68*\sunit) .. (2.1*\sunit,0);
\draw[name path=delta_3, rounded corners=4.5pt] (-1.1*\sunit,0) -- (-1*\sunit,0.1*\sunit) -- (-0.9*\sunit,0) .. controls (-0.4*\sunit,-0.52*\sunit) and (1.4*\sunit,-0.52*\sunit) .. (1.9*\sunit,0) -- (2*\sunit,0.1*\sunit) -- (2.1*\sunit,0);
\fill [name intersections={of=alpha_1 and delta_3, by={q3}}] (q3) circle (2.1pt);
\fill [name intersections={of=beta_1d and delta_3d, by={q4}}] (q4) circle (2.1pt);
\fill [name intersections={of=alpha_2d and delta_3d, by={q5}}] (q5) circle (2.1pt);
\fill [name intersections={of=beta_2 and delta_3, by={q6}}] (q6) circle (2.1pt);
\node [below] at (0.2*\sunit,-0.49*\sunit) {$\delta_{{3}}$};
\end{tikzpicture}
\end{center}
\caption{Vanishing cycles when $x$ follows $\ell_{{3}}$}
\label{VS-1}
\end{figure}
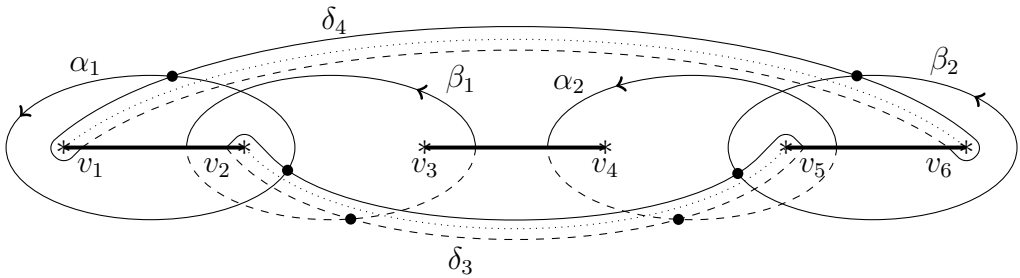

The vanishing cycle $\delta_{{3}}$ comes from the coalescence of $v_2$ and $v_5$ at the singularity. We take an auxiliary arc, connecting
$v_2$ and $v_5$, now in the lower part of the plane, and $\delta_{{3}}$ runs at
close distance counterclockwise around this arc. Like before, we find and prove $\delta_{{3}} =\alpha_1 +\beta_1 +\alpha_2+\beta_2$.
We apply $T_{\delta_{{3}}}\colon \gamma \mapsto \gamma -(\delta_{{3}}\cdot \gamma) \delta_{{3}}$ to each element of our basis, resulting in the matrix.
\[
  T_{\delta_{{3}}}=\begin{pmatrix}
      2&-1&1&-1\\
      1&0&1&-1\\
      1&-1&2&-1\\
      1&-1&1&0
   \end{pmatrix}.
\]
Since $\delta_4 = \beta_1 + \alpha_2$ equals $-\delta_1$  from $\ell_1$ we get $T_{\delta_4} = T_{\delta_1}$.
We verified that $T_{\delta_{{3}}}$ and $T_{\delta_{{4}}}$ commute, as they should.
Their product is:
\[M_{{3}} := T_{\delta_{{3}}} T_{\delta_{{4}}} = \begin{pmatrix}
      2&-1&1&-1\\
      2&0&1&-2\\
      2&-1&2&-2\\
      1&-1&1&0
   \end{pmatrix}.
\]


Finally, $M_4 := (M_1 M_2 M_3)^{-1}$
has the expected Jordan-structure consisting of two $2\times 2$ blocks,
like matrices $M_1, M_2, M_3$, except that the eigenvalues are $-1$ instead of~1.  To guard against mistakes, we independently computed each matrix with numerical analytic continuation,
and found the same $M_1,\ldots,M_4$ to high precision~\cite{supp}.
\end{proof}

\subsection{Monodromy and real multiplication} \label{mono-RM}
With $\mathcal{L}$ as in Section~\ref{RM-check}, the integrals from Theorem~\ref{main} are
\begin{equation*}
	I_{\pm}(u,x) =  \frac1{\pi} \int_0^1 \omega_1 +  c_{\pm}  \omega_2,
\end{equation*}
where $c_{\pm} = -u  \pm \sqrt{2u^2-2u} \in \mathcal{L}$
are linear combinations of the periods, and hence they are solutions of $L^{\text{PF}}$ (from Section~\ref{monodromy}).
Indeed,
\begin{equation}
	L^{\text{PF}} = \operatorname{LCLM}(L_+^{\text{PF}}, L_-^{\text{PF}}) \in \mathcal{L}(x)[\d / \d x].
	\label{PFdec}
\end{equation}

The remarkable feature that our hyperelliptic integrals $I_{\pm}(u,x)$ manage to satisfy 2nd order equations, instead of the expected $2g = 4$,
corresponds to the fact that 
the $4$th order Picard--Fuchs operator $L^{\text{PF}}$ decomposes into two second-order
operators (equation~\eqref{PFdec}). 
This in turn is equivalent to $\mathbb{C}^4$ decomposing as a direct sum of 2-dimensional $G$-invariant subspaces,
where $G := \langle M_1,M_2,M_3\rangle$ is the {\em monodromy group} of $L^{\text{PF}}$.
It is not \emph{a priori} obvious that this property is possible for a family of hyperelliptic curves.

The two $G$-invariant subspaces are defined over $\mathbb{Q}(\sqrt{2})$. That suggests that $C = C^{u,x}$ has real multiplication by $\sqrt{2}$,
i.e., $\mathbb Z[\sqrt{2}] \subseteq \operatorname{End}(\Jac(C))$, as was verified in Section~\ref{RM-check}.
Endomorphisms of $\Jac(C)$ induce endomorphisms of the lattice $H_1(C, \mathbb Z)=\mathbb Z^4$
that commute with the monodromy representation
\[
\rho\colon\pi_1(\mathbb C \setminus \{x_1,x_2,x_3,x_4\},x_{\text{BP}}) \to \operatorname{Sp}_4(\mathbb Z),
\]
where $\rho(\ell_i) = M_i$.

For a representation $\rho\colon G \to \operatorname{GL}_n(\mathbb Z)$,
the endomorphism ring $\operatorname{End}(\rho)$ consists of all matrices $A \in \operatorname{GL}_n(\mathbb Z)$
such that $ \rho(g) A =A \rho(g)$ holds for all $g \in G$. Direct computation with the matrices from Proposition~\ref{monodromymatrices} shows
that our $\rho$ has the following endomorphism ring.
\begin{prop}
\label{deco-RM} 
$\operatorname{End}(\rho) =\mathbb Z\operatorname{Id} \oplus \,\mathbb Z\operatorname{RM} \cong \mathbb{Z}[\sqrt{2}]$
where
\[
\operatorname{RM} := \begin{pmatrix}
      0&0&1&0\\
      0&0&0&2\\
      2&0&0&0\\
      0&1&0&0
   \end{pmatrix}
\]
is a matrix with
$\operatorname{RM}^2=2\operatorname{Id}$.
\end{prop}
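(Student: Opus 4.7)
The plan is a concrete linear-algebra computation carried out in three stages, using only the explicit matrices $M_1, M_2, M_3$ from Proposition~\ref{monodromymatrices} and the explicit $\operatorname{RM}$.

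\textbf{Stage 1 (existence).} First I would verify by direct $4\times 4$ matrix multiplication that $\operatorname{RM}^2 = 2\operatorname{Id}$ and that $M_i \operatorname{RM} = \operatorname{RM} M_i$ for $i = 1, 2, 3$. Since $M_4 = (M_1 M_2 M_3)^{-1}$, commutativity with $M_4$ follows automatically. This shows the inclusion $\mathbb{Z}\operatorname{Id} \oplus \mathbb{Z}\operatorname{RM} \subseteq \operatorname{End}(\rho)$, and the relation $\operatorname{RM}^2 = 2\operatorname{Id}$ yields the ring isomorphism with $\mathbb{Z}[\sqrt{2}]$ via $\operatorname{RM} \mapsto \sqrt{2}$.

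\textbf{Stage 2 (reverse inclusion).} Let $A = (a_{ij}) \in M_4(\mathbb{Z})$ be generic. The three conditions $A M_i = M_i A$ for $i = 1,2,3$ form a homogeneous linear system of $48$ equations in the $16$ unknowns $a_{ij}$. I would compute the $\mathbb{Q}$-kernel of this system (e.g.\ by Gaussian elimination or Smith normal form). The expected outcome is a $2$-dimensional solution space; since $\operatorname{Id}$ and $\operatorname{RM}$ are already linearly independent elements of the kernel, they must span it. A practical shortcut: $M_2$ has a convenient block-upper-triangular shape, so commutation with $M_2$ alone forces many entries of $A$ to vanish, cutting the free parameters to a handful that are then pinned down by $M_1$ (or $M_3$).

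\textbf{Stage 3 (saturation of the lattice).} Finally I would check that the only $\mathbb{Q}$-combinations $a\operatorname{Id} + b\operatorname{RM}$ lying in $M_4(\mathbb{Z})$ have $a, b \in \mathbb{Z}$. This is immediate: the diagonal of $a\operatorname{Id} + b\operatorname{RM}$ is $(a,a,a,a)$, forcing $a \in \mathbb{Z}$; and then the off-diagonal $1$-entries of $\operatorname{RM}$ force $b \in \mathbb{Z}$. Combined with Stage 2 this yields $\operatorname{End}(\rho) = \mathbb{Z}\operatorname{Id} \oplus \mathbb{Z}\operatorname{RM}$.

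The main obstacle is not conceptual but organizational, namely laying out the $48 \times 16$ system legibly so that the rank-$2$ kernel is clearly visible. A more abstract alternative, which would avoid the explicit kernel computation, is to invoke Schur's lemma on the $\mathbb{Q}$-irreducible representation $\rho$ (noted in the footnote to Proposition~\ref{monodromymatrices}): then $\operatorname{End}(\rho) \otimes \mathbb{Q}$ is a division algebra over $\mathbb{Q}$ containing $\mathbb{Q}(\sqrt{2})$, and a dimension count using the decomposition of $\rho \otimes \mathbb{C}$ into two $2$-dimensional summands (one for each embedding of $\mathbb{Q}(\sqrt{2})$ into $\mathbb{C}$) rules out any larger commutant, leaving $\operatorname{End}(\rho) \otimes \mathbb{Q} = \mathbb{Q}(\sqrt{2})$; the saturation argument of Stage 3 then completes the proof.
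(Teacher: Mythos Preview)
Your proposal is correct and is precisely the ``direct computation with the matrices from Proposition~\ref{monodromymatrices}'' that the paper offers as its entire proof; you have simply spelled out what that computation entails. The saturation check in Stage~3 and the Schur-lemma alternative are nice additions, but the core approach is identical to the paper's.
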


By taking bases of the $G$-invariant subspaces of $\mathbb{C}^4$ (or by taking eigenvectors of $\operatorname{RM}$)
we obtain the columns of the matrix
\[P:= \begin{pmatrix}
              0&\frac12\sqrt{2}&0&-\frac12 \sqrt{2}\\
              \sqrt{2}&0&-\sqrt{2}&0\\	
              0&1&0&1\\
              1&0&1&0
   \end{pmatrix}
\]
Consequently, the matrices  $P^{-1} M_i P$ have block diagonal form
\begin{equation} P^{-1} M_i P  = \begin{pmatrix} A_i& 0\\0&\tilde A_i \end{pmatrix}.  \label{splitting} \end{equation}
We find
\begin{alignat*}{2}
A_1&:= \begin{pmatrix} 2-\sqrt{2} &-\frac12 \sqrt{2} +1\\-2+\sqrt{2}&\sqrt{2} \end{pmatrix}, &\quad
A_2&:= \begin{pmatrix} 1 &0\\-2&1 \end{pmatrix}, \\                     
A_3&:= \begin{pmatrix} -\sqrt{2} &\frac12 \sqrt{2} +1\\-2-\sqrt{2}&2+\sqrt{2} \end{pmatrix}, &\quad
A_4&:= \begin{pmatrix} -1 &-1\\0&-1 \end{pmatrix}.
\end{alignat*}
The matrices $\tilde A_i$ are obtained from
$A_i$ with the Galois automorphism $\sqrt{2} \mapsto -\sqrt{2}$.
A splitting like~(\ref{splitting}) 
is equivalent
to the decomposition of $L^{\text{PF}}$ as the LCLM of two second order equations.
It is interesting that the real multiplication of the curves is visible in  ${\rm End}(\rho)$. 

\begin{theorem}\label{monodromy-dense}
 The monodromy group of $L_{\pm}^{\textup{PF}}$ is a \emph{dense} subgroup of  $\operatorname{SL}_2(\mathbb{R})$.
\end{theorem}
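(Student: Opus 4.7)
I would prove density of the monodromy group $G_+ := \langle A_1,A_2,A_3\rangle$ of $L_+^{\textup{PF}}$ (a subgroup of $\operatorname{SL}_2(\mathbb{Z}[\sqrt{2}])\subset\operatorname{SL}_2(\mathbb{R})$ by Proposition~\ref{monodromymatrices}) by showing that its closure $H:=\overline{G_+}$ equals $\operatorname{SL}_2(\mathbb{R})$; the statement for $L_-^{\textup{PF}}$ then follows by applying the Galois involution $\sqrt 2\mapsto -\sqrt 2$, which carries $G_+$ to the monodromy group of $L_-^{\textup{PF}}$. The backbone is the following dichotomy, obtained from Cartan's closed subgroup theorem together with the fact that the normalizer of $\operatorname{SO}(2)$ in $\operatorname{SL}_2(\mathbb{R})$ is $\operatorname{SO}(2)$ itself (it is the stabilizer of $i$ in the upper half-plane action): any closed subgroup of $\operatorname{SL}_2(\mathbb{R})$ whose identity component contains a conjugate of $\operatorname{SO}(2)$ is either compact and equal to that conjugate, or equal to all of $\operatorname{SL}_2(\mathbb{R})$. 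The plan therefore reduces to producing in $G_+$ (a) an elliptic element whose rotation angle is \emph{not} a rational multiple of $\pi$, forcing $H^0$ to contain a conjugate of $\operatorname{SO}(2)$, and (b) a hyperbolic element, ruling out the compact alternative.

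For (a), I would take $g:=A_1A_2^2$. A direct multiplication using Proposition~\ref{monodromymatrices} gives
\[
g=\begin{pmatrix} -2+\sqrt 2 & 1-\tfrac12\sqrt 2\\ -2-3\sqrt 2 & \sqrt 2\end{pmatrix},\qquad \operatorname{tr}(g)=2\sqrt 2-2\in(-2,2),
\]
so $g$ is elliptic in $\operatorname{SL}_2(\mathbb{R})$ with rotation angle $\theta$ satisfying $2\cos\theta=2\sqrt 2-2$. The nontrivial Galois conjugate of $\operatorname{tr}(g)$ is $-2\sqrt 2-2$, whose absolute value exceeds~$2$. By a standard consequence of Kronecker's theorem on algebraic integers with all conjugates on the unit circle, an algebraic integer $t$ belongs to $\{2\cos(\pi q):q\in\mathbb Q\}$ iff every Galois conjugate of $t$ lies in $[-2,2]$; since $\operatorname{tr}(g)$ fails this criterion, $\theta/\pi$ is irrational and the closure $\overline{\langle g\rangle}$ is a one-parameter compact subgroup conjugate to $\operatorname{SO}(2)$. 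For (b), the element $A_1 A_2^{-1}$ has trace $4-\sqrt 2>2$ and is therefore hyperbolic, with unbounded powers, so $H$ cannot be compact. Combining (a) and (b) with the dichotomy forces $H=\operatorname{SL}_2(\mathbb{R})$.

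The main obstacle is step (a). Most short words in the generators turn out to be either parabolic\,---\,such as $A_1^2$, $A_1A_3$, or $A_1A_2A_3=A_4^{-1}$ (all with trace $\pm 2$)\,---\,or elliptic of finite order\,---\,such as $A_1A_2$, $A_2A_3$, or $A_1A_4$ (all with trace $\pm\sqrt 2=\pm 2\cos(\pi/4)$, giving eight-torsion rotations). None of these generates a circle. What makes the construction succeed for $A_1A_2^2$, and is ultimately responsible for density rather than discreteness, is the arithmetic phenomenon that Galois conjugates of traces in $G_+$ can escape $[-2,2]$\,---\,impossible for an arithmetic Fuchsian group derived from a quaternion algebra over $\mathbb Q(\sqrt 2)$ split at exactly one real place, but available here because the relevant quaternion algebra $M_2(\mathbb Q(\sqrt 2))$ splits at both real places. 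Thus the only non-routine ingredient in the proof is a finite search for a suitable word, which $A_1 A_2^2$ settles.
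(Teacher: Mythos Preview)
Your argument is correct and genuinely different from the paper's. The paper proceeds by locating, via a short computer search, two unipotent elements
\[
u=\begin{pmatrix}1&2\sqrt2\\0&1\end{pmatrix},\qquad
l=\begin{pmatrix}1&0\\4\sqrt2&1\end{pmatrix}
\]
in $\langle A_1,A_2,A_3\rangle$ and then observes that $\langle u,A_4^2\rangle$ is dense in the upper unipotent subgroup~$U$ while $\langle l,A_2\rangle$ is dense in the lower unipotent subgroup~$L$ (because $\mathbb Z+\sqrt2\,\mathbb Z$ is dense in~$\mathbb R$); since $U$ and $L$ generate $\operatorname{SL}_2(\mathbb R)$, density follows. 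Your route instead produces an elliptic element $A_1A_2^2$ of irrational rotation angle together with a hyperbolic element, and then appeals to the classification of closed connected subgroups of $\operatorname{SL}_2(\mathbb R)$ via Cartan's theorem. The paper's proof is more self-contained (it needs nothing beyond the density of $\mathbb Z[\sqrt2]$ in~$\mathbb R$) but relies on a machine search for the words giving $u$ and~$l$; your proof is search-free and highlights the conceptual mechanism---a Galois conjugate of the trace escaping $[-2,2]$---behind non-discreteness, at the cost of invoking more Lie-theoretic background.

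One small inaccuracy: the entries of the $A_i$ do not all lie in $\mathbb Z[\sqrt2]$ (for instance $A_1$ has the entry $1-\tfrac12\sqrt2$), so $G_+\not\subset\operatorname{SL}_2(\mathbb Z[\sqrt2])$ as written. This does not affect your proof, since what you actually use is that $\operatorname{tr}(A_1A_2^2)=2\sqrt2-2$ is an algebraic integer in $\mathbb Q(\sqrt2)$; that holds because the eigenvalues of any word in the $A_i$ form a subset of the eigenvalues of the corresponding word in the integer matrices $M_i$.
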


\begin{proof}
The group $\operatorname{SL}_2(\mathbb{R})$ is generated by
\[
U := \bigg\{\begin{pmatrix} 1 & r \\ 0 & 1 \end{pmatrix} : r \in \mathbb{R}\bigg\}
\quad\text{and}\quad
L := \bigg\{\begin{pmatrix} 1 & 0 \\ r & 1 \end{pmatrix} : r \in \mathbb{R}\bigg\}.
\]
A computer search for ``small'' elements of the monodromy group $\langle A_1,A_2,A_3 \rangle$ produced
\[
u = \begin{pmatrix} 1 & 2 \sqrt{2} \\ 0 & 1 \end{pmatrix}
\quad\text{and}\quad
l = \begin{pmatrix} 1 & 0 \\ 4 \sqrt{2} & 1 \end{pmatrix}.
\]
Now $\langle u, A_4^2 \rangle$ is a dense subgroup of~$U$,
while $\langle l, A_2 \rangle$ is a dense subgroup of~$L$.
\end{proof}

\begin{corollary}\label{non-2F1}
If $L_{\pm}^{\textup{PF}}$ is solvable in terms of a $\mbox{}_2F_1$ function, then the monodromy group of the Euler--Gauss hypergeometric equation for this $\mbox{}_2F_1$ must
have a finite-index subgroup that is defined over $\mathbb{Q}(\sqrt{2})$ and is dense in $\operatorname{SL}_2(\mathbb{R})$.
\end{corollary} 

Our goal is to establish that $L_{\pm}^{\textup{PF}}$ is not $\mbox{}_2F_1$-solvable, and it appears that
we are \emph{almost} done.  All we have to do is to classify the $\mbox{}_2F_1(a, b ; c \mid x)$ that satisfy the assumption in Corollary~\ref{non-2F1}, and show that the list is empty.
However, it is not!  The monodromy of $\mbox{}_2F_1(\frac14, \frac58 ; 1 \mid x)$ satisfies this assumption.
And so the saga continues.

\subsection{Algebraic automorphisms}
\label{alg-auto}
How can we rule out 
a solution in terms of   $\mbox{}_2F_1(\frac14, \frac58 ; 1 \mid f)$
for some algebraic function $f$? Call $d := [\mathbb{C}(f,x) : \mathbb{C}(x)]$ the \emph{algebraic degree} of~$f$.
%
%
%
An example with $d = 2$ that comes from modular forms appears in the $\mbox{}_2F_1$-expression for the generating function of the Ap\'ery sequence.
The hypergeometric series $\mbox{}_2F_1(\frac14, \frac58 ; 1 \mid x)$ is of a different nature,
its Schwarz triangle has angles
(after scaling down by~$\pi$) $|1-c|=0$, $|c-a-b|=\frac18$ and $|b-a|=\frac38$, so it is not even on Takeuchi's list of arithmetic triangle groups \cite{Ta77}.
However, there do exist examples with $d > 1$ that do not come from modular forms, for instance:
\begin{equation} 
	2x(25x^2+14x+25)y'' + (75x^2+28x+25) y' + (3x+ \tfrac12)y = 0.
\label{favourite_example}
\end{equation} 
Here $d=2$, which means that it is not $\mbox{}_2F_1$-solvable with a rational pullback, but it
does have a $\mbox{}_2F_1$-solution with a pullback $f$ in a quadratic extension. The implementation from \cite{IvH17}
finds a solution with $f \in \mathbb{Q}(x, \sqrt{25x^3+14 x^2+25x})$.  

With our techniques for finding $\mbox{}_2F_1$-type solutions we rule out $d=1$ for $L_{\pm}^{\textup{PF}}$,
and $d=2$; the main difficulty is excluding a $\mbox{}_2F_1$-type solution without knowing a bound on~$d$.
We will outline a strategy to potentially reduce to the case $d=1$.

If a second order differential equation can be solved in terms of $\mbox{}_2F_1(a, b ; c \mid f)$ for two different
pullback functions $f$, then we say that $\mbox{}_2F_1(a, b ; c \mid f)$ has \emph{non-unique pullbacks}.
If the equation has rational function coefficients but the algebraic degree is not~1, then taking two conjugates $f_1 \neq f_2$
of $f$ means that the $\mbox{}_2F_1$ has non-unique pullbacks.
If $g \in \overline{ \mathbb{C}(x) }$ is the composition of $f_1$ with the inverse of $f_2$, then 
then for a suitable algebraic pre-factor $r$,   the functions $\mbox{}_2F_1(a, b ; c \mid x)$
and $r \cdot \mbox{}_2F_1(a, b ; c \mid g)$ will satisfy the same Euler--Gauss hypergeometric equation.
We call such $g \in \overline{ \mathbb{C}(x) }$ with $g \neq x$ an {\em algebraic automorphism}. 
Non-unique pullbacks $f_1 \ne f_2$ can occur even in non-modular and non-arithmetic situations, like
\begin{gather*}
\mbox{}_2F_1\bigg(\begin{matrix} \frac1{12}, \, \frac16 \\ \frac12 \end{matrix}\biggm| f_1(x) \bigg)
= (1+x)^{-1/2} \cdot \mbox{}_2F_1\bigg(\begin{matrix} \frac1{12}, \, \frac16 \\ \frac12 \end{matrix}\biggm| f_2(x) \bigg),
\\
\text{with}\quad
f_1(x) = -x(1+x)(3+4x+4x^2)^2
\;\;\text{and}\;\;
f_2(x) = f_1\bigg(\frac{-x}{1+x}\bigg) = \frac{x(3+2x+3x^2)^2}{(1+x)^6},
\end{gather*}
and
\begin{gather*}
\mbox{}_2F_1\bigg(\begin{matrix} \frac1{40}, \, \frac{11}{40} \\ \frac45 \end{matrix}\biggm| f_1(x) \bigg)
=\bigg(1-\frac{44x}{25}\bigg)^{-1/10}\cdot\mbox{}_2F_1\bigg(\begin{matrix} \frac1{40}, \, \frac{11}{40} \\ \frac45 \end{matrix}\biggm| f_2(x) \bigg),
\\
\text{with}\quad
f_1(x) = \frac{256x(1-x)^5}{25-44x+20x^2}
\\\ \text{and}\;\;
f_2(x) = f_1\bigg(\frac{-x}{1-\frac{44}{25}x}\bigg) = -\frac{256x(25-19x)^5}{25(25-44x)^4(25-44x+20x^2)}.
\end{gather*}
Here $f_1, f_2$ are rational, but algebraic automorphisms do not always correspond to genus~0 extensions of $\mathbb C(x)$, e.g., in the case of~\eqref{favourite_example}.
So far all non-trivial cases%
\footnote{If two of the exponent-differences $|1-c|$, $|c-a-b|$, $|b-a|$ are equal, then a M\"obius transformation that swaps the two points trivially gives an ``algebraic'' automorphism $g \in \mathbb{Q}(x)$.}
correspond to Schwarz triangles with all angles either 0 and/or reciprocals of a positive integer, which does not include $\mbox{}_2F_1(\frac14, \frac58 ; 1 \mid x)$.

Another way to potentially prove that  $\mbox{}_2F_1(\frac14, \frac58 ; 1 \mid x)$ has no algebraic automorphisms
is by showing that a series expansion of any candidate $g$ has infinitely many primes in its denominators, which implies that $g$ is not algebraic.
There is a formula for the power series $g$ in terms of its first term \cite{vH13}.
The formula includes integration, which divides the coefficient of $x^{n}$ by $n+1$. When $n+1$ is prime $p$,
the division introduces $p$ in the denominator, unless a certain congruence $c_p \equiv 1 \bmod p$ holds, where $c_p$ is defined below.
The algebraicity of $g$ then implies that congruence for all sufficiently large primes $p$.
Thus, if we could prove that the congruence does not hold for infinitely many primes, it contradicts $g$ being an {\em algebraic} automorphism.
	%
Computing $c_p$ for a number of primes leads to the following conjecture.
\begin{conjecture}
\label{conj1}
Define
\[
	Y(x) = \frac{9}{16} (1-x)^{7/8} \cdot \mbox{}_2F_1\bigg(\begin{matrix} \frac14, \, \frac58 \\ 1 \end{matrix}\Bigm| x\bigg)^2,
\]
and let $c_n$ be the coefficient of $x^n$ in $1/Y(x)$.  Let $p > 3$ be a prime number.
Then
\[
	c_p \equiv 1 \bmod p  \iff p \equiv \pm 1 \bmod 8.
\]
In particular, $c_p \not\equiv 1 \bmod p$ for infinitely many primes.
\end{conjecture}
Repeating this for other $\mbox{}_2F_1$ functions leads to other similar conjectures, such as:

\begin{conjecture}[exponent-differences $0$, $1/2$, $1/k$]
\label{conj2}
For $k \ge 3$, define
\[
Y(x) = \frac{3k^2+4}{8k^2} (1-x)^{1/2} \cdot \mbox{}_2F_1\bigg(\frac14 - \frac1{2k}, \, \frac14 + \frac1{2k} ; 1 \biggm| x\bigg)^2.
\]
Let $c_n$ be the coefficient of $x^n$ in the series expansion of $1/Y(x)$ at $x=0$.
Then for all but finitely many primes~$p$,
\[
	c_p \equiv 1 \bmod p \iff p \equiv \pm 1 \bmod k.
\]
\end{conjecture}

Conjecture~\ref{conj2} is true for $k = 3, 4, 6$.
(For these $k$, the $\mbox{}_2F_1$ does have algebraic automorphisms, which implies the congruence for all but finitely many primes~$p$.)

To summarise: while we cannot prove Conjecture~\ref{conj1}, it is easy to test it for thousands of primes.
So it is exceedingly unlikely that $\mbox{}_2F_1(\frac14, \frac58 ; 1 \mid x)$ has an algebraic automorphism.
If this were proven, it would imply unique\,---\,hence rational\,---\,pullbacks, allowing us to conclude that $L_{\pm}^{\textup{PF}}$ is not $\mbox{}_2F_1$-solvable.

Finally, note that a description (and understanding) of all $_2F_1$ cases with algebraic automorphisms is an interesting question itself.

\subsection{Braid action}
\label{braid}
We computed the monodromy of $L^{\text{PF}}$ in Section~\ref{monodromy} for one value of $u$, namely $u=1/2$. As long as the paths in diagrams do not change,
the monodromy matrices $M_1,\ldots,M_4$ in Proposition~\ref{monodromymatrices}
depend continuously on $u$. But they have integer entries, so they must be {\em locally constant}.
The monodromy matrices $A_1,\ldots,A_4$ of $L_{+}^{\textup{PF}}$ were derived from $M_1,\ldots,M_4$ in section~\ref{mono-RM}, so they
must be locally constant as well.

However, a sufficiently large change in $u$ does alter paths.
Figure~\ref{Pos1} shows what can happen when $u$ moves away from $1/2$, with $x_1$ and $x_3$ moving as a result, while 
$x_2 = 0$ and $x_4 = \infty$ are fixed.

For $M_i$ to depend continuously on $u$ (and thus remain constant) the path $\ell_i$ around $x_i$ must be dragged along.
Figure~\ref{Pos1} shows that when $u$ moves far enough, a dragged path need no longer be the shortest path around $x_i$.
If we recompute the $M_i$'s using the shortest paths $\ell_i^{\text{new}}$, and compare with the original $u=1/2$ monodromy matrices belonging to the dragged paths $\ell_i$, then the difference corresponds to a {\em braid action}.
Figure~\ref{Pos1} shows how the monodromy over $\ell_1$, $\ell_2$, $\ell_3$
determines the monodromy over the new paths $\ell_1^{\text{new}}$, $\ell_2^{\text{new}}$, $\ell_3^{\text{new}}$.
Since $\ell_1^{\text{new}} = \ell_2$, its matrix $M_1^{\text{new}}$ will be $M_2$.
In the figure, $\ell_3^{\text{new}} = \ell_3$ and $\ell_4^{\text{new}}=\ell_4$ (not drawn),
hence $M_3^{\text{new}}, M_4^{\text{new}}$ are the same as $M_3, M_4$. Then from~\eqref{product1} we find
\[
M_1 M_2 M_3 M_4 = \operatorname{Id} =
M_1^{\text{new}} M_2^{\text{new}} M_3^{\text{new}} M_4^{\text{new}} =
M_2 M_2^{\text{new}}  M_3 M_4 
\]
and hence $M_2^{\text{new}}=M_2^{-1} M_1M_2$.
This procedure gives a map
\[
	\Braid_{1,2}\colon  \operatorname{GL}_4(\C)^4   \to \operatorname{GL}_4(\C)^4
\]
which sends $(M_1, \ldots, M_4)$ to $(M_1^{\text{new}}, \ldots, M_4^{\text{new}}) = (M_2,\,M_2^{-1} M_1 M_2,\,M_3, M_4)$.
We define $\Braid_{2,3}$ in the same way.  As for $\Braid_{3,4}$, the conjugacy classes of $M_3$ and $M_4$ differ by a minus sign,
so we compose $\Braid_{3,4}$ with multiplying $M_3$ and $M_4$ by~$-1$.  (Alternatively, one could only use {\em pure braids},
which map to the trivial permutation of $\{1,2,3,4\}$, such as $\Braid_{3,4}^2$.)

\begin{figure}[ht]
\begin{center}
\begin{tikzpicture}
\def\sunit{1.8}
\fill (-1*\sunit,1*\sunit) circle (2.1pt); \fill (-0.3*\sunit,-0.6*\sunit) circle (2.1pt); \node [above] at (-0.285*\sunit,-0.6*\sunit) {$x_1$};
\fill (0,0) circle (2.1pt); \node [right] at (0,0) {$x_2$};
\fill (-1*\sunit,-1*\sunit) circle (2.1pt); \fill (-1*\sunit,-2*\sunit) circle (2.1pt); \node [right] at (-1*\sunit,-1.9*\sunit) {$x_3$};
\fill (1.5*\sunit,0) circle (2.1pt); \node [right] at (1.5*\sunit,0) {$x_{\text{BP}}$};
\draw [dotted] (-1*\sunit,1*\sunit) .. controls (-0.9*\sunit,0) .. (-0.3*\sunit,-0.6*\sunit);
\draw [dotted] (-1*\sunit,-1*\sunit) -- (-1*\sunit,-2*\sunit);
\draw[decoration={markings, mark=at position 0.3 with {\arrow[line width=1.2pt]{>}}}, postaction={decorate}]
 (1.5*\sunit,0) .. controls (0.85*\sunit,0.25*\sunit) .. (0.2*\sunit,0.4*\sunit) .. controls (-0.6*\sunit,0.5*\sunit) and  (-0.75*\sunit,0.3*\sunit) .. (-0.45*\sunit,-0.6*\sunit) .. controls (-0.3*\sunit,-0.85*\sunit) and (-0.05*\sunit,-0.85*\sunit) .. (-0.12*\sunit,-0.55*\sunit) .. controls (-0.4*\sunit,0.0*\sunit) and (-0.4*\sunit,0.25*\sunit) .. (0*\sunit,0.25*\sunit) .. controls (0.7*\sunit,0.15*\sunit)  .. (1.5*\sunit,0);
\draw[decoration={markings, mark=at position 0.7 with {\arrow[line width=1.2pt]{>}}}, postaction={decorate}]
 (1.5*\sunit,0) .. controls (-0.1*\sunit,0.2*\sunit) .. (-0.15*\sunit,0) .. controls (-0.1*\sunit,-0.2*\sunit) .. (1.5*\sunit,0);
\draw[decoration={markings, mark=at position 0.3 with {\arrow[line width=1.2pt]{>}}}, postaction={decorate}]
 (1.5*\sunit,0) .. controls (-1.25*\sunit,-1.8*\sunit) .. (-1.2*\sunit,-2.1*\sunit) .. controls (-1.0*\sunit,-2.35*\sunit) .. (1.5*\sunit,0);
\node [above] at (0.55*\sunit,0.3*\sunit) {$\ell_1$};
\node [below] at (0.3*\sunit,-0.1*\sunit) {$\ell_2$};
\node [below] at (0.4*\sunit,-1.1*\sunit) {$\ell_3$};
\end{tikzpicture}
\qquad
\begin{tikzpicture}
\def\sunit{1.8}
\fill (-1*\sunit,1*\sunit) circle (2.1pt); \fill (-0.3*\sunit,-0.6*\sunit) circle (2.1pt); \node [right] at (-0.3*\sunit,-0.55*\sunit) {$x_1$};
\fill (0,0) circle (2.1pt); \node [right] at (0,0) {$x_2$};
\fill (-1*\sunit,-1*\sunit) circle (2.1pt); \fill (-1*\sunit,-2*\sunit) circle (2.1pt); \node [right] at (-1*\sunit,-1.9*\sunit) {$x_3$};
\fill (1.5*\sunit,0) circle (2.1pt); \node [right] at (1.5*\sunit,0) {$x_{\text{BP}}$};
\draw [dotted] (-1*\sunit,1*\sunit) .. controls (-0.9*\sunit,0) .. (-0.3*\sunit,-0.6*\sunit);
\draw [dotted] (-1*\sunit,-1*\sunit) -- (-1*\sunit,-2*\sunit);
\draw[decoration={markings, mark=at position 0.3 with {\arrow[line width=1.2pt]{>}}}, postaction={decorate}]
 (1.5*\sunit,0) .. controls (-0.1*\sunit,0.2*\sunit) .. (-0.15*\sunit,0) .. controls (-0.1*\sunit,-0.2*\sunit) .. (1.5*\sunit,0);
\draw[decoration={markings, mark=at position 0.35 with {\arrow[line width=1.2pt]{>}}}, postaction={decorate}]
 (1.5*\sunit,0) .. controls (-0.5*\sunit,-0.4*\sunit) .. (-0.5*\sunit,-0.7*\sunit) .. controls (-0.45*\sunit,-0.85*\sunit) .. (1.5*\sunit,0);
\draw[decoration={markings, mark=at position 0.3 with {\arrow[line width=1.2pt]{>}}}, postaction={decorate}]
 (1.5*\sunit,0) .. controls (-1.25*\sunit,-1.8*\sunit) .. (-1.2*\sunit,-2.1*\sunit) .. controls (-1.0*\sunit,-2.35*\sunit) .. (1.5*\sunit,0);
\node [above] at (0.3*\sunit,0.1*\sunit) {$\ell_1^{\text{new}}$};
\node [left] at (-0.45*\sunit,-0.7*\sunit) {$\ell_2^{\text{new}}$};
\node [below] at (0.4*\sunit,-1.1*\sunit) {$\ell_3^{\text{new}}$};
\end{tikzpicture}
\end{center}
\caption{Positioning after a $u$-move}
\label{Pos1}
\end{figure}
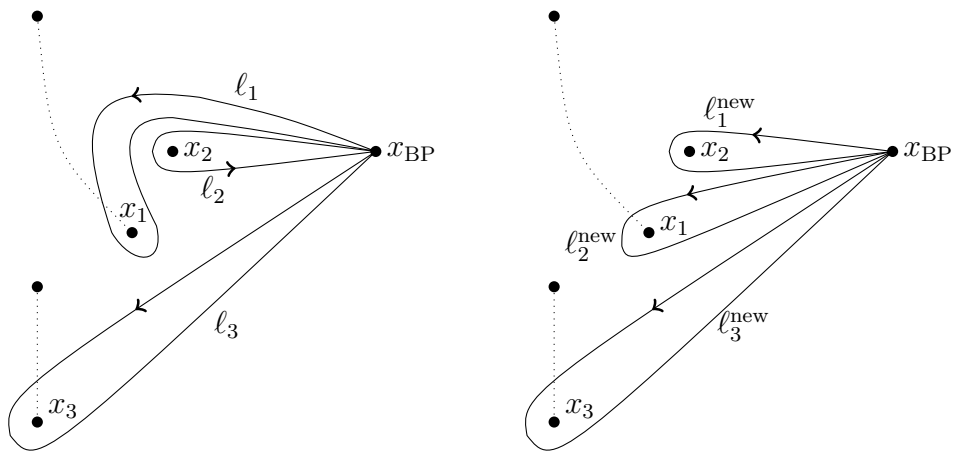

To obtain independence of basis changes, let $\sim$ be the equivalence relation on  $\operatorname{GL}_4(\C)^4$
where $(M_1, \ldots, M_4) \sim (\tilde{M}_1, \ldots, \tilde{M}_4)$ if they are simultaneously conjugated (that is, $\tilde{M}_i = P^{-1} M_i P$ for some $P$ and all~$i$).
We denote the equivalence class of $(M_1, \ldots, M_4)$ as
\[ [M_1,\ldots, M_4]_\sim \in  \operatorname{GL}_4(\C)^4 /\hspace{-3pt}\sim. \]
The braid group $\langle \Braid_{1,2}, \Braid_{2,3}, \Braid_{3,4} \rangle$ acts on $\operatorname{GL}_4(\C)^4/\hspace{-3pt}\sim$ well.
We denote the orbit of $[M_1,\ldots,M_4]_\sim$ as ${O}([M_1,\ldots,M_4]_\sim) \subset \operatorname{GL}_4(\C)^4/\hspace{-3pt}\sim$.

Like $L^{\text{PF}}$,  the monodromy $(A_1,\ldots,A_4)$  of  $L_+^{\text{PF}}$ is locally constant, and hence,
changing the value its parameter $t$ 
causes braid actions. 
The braid orbit contains all monodromies that can be obtained from $L_+^{\text{PF}}$ for non-degenerate values of $t$.
The simplest monodromy we found in the braid orbit was
\begin{equation}
\left[
\begin{pmatrix}
1 & 1 
\\
 0 & 1 
\end{pmatrix}, \;
\begin{pmatrix}
1 & 0 
\\
\sqrt{2}-2 & 1 
\end{pmatrix}, \;
\begin{pmatrix}
1 & 0 
\\
-\sqrt{2}-2 & 1 
\end{pmatrix}, \;
\begin{pmatrix}
1 & -1 
\\
4 & -3
\end{pmatrix}\right]_\sim.	\label{O8}
\end{equation}
The following can be shown in a way similar to Section~5 in \cite{vHK19}.

\begin{remark} \label{Rem2}
Let $L$ be a second order regular singular differential equation, with four true singularities, and an
arbitrary number of apparent singularities.  If the monodromy matrices of $L$ are in the above
mentioned braid orbit, then $L$ can be solved in terms of solutions of $L_+^{\text{PF}}$ for some $t$.
In particular, any such $L$ must be arithmetic and have a solution that can be written in terms of a hyperelliptic integral.
\end{remark}

\begin{remark} \label{BraidO}
Let $\zeta$ be a root of unity and let $\alpha = \zeta + \zeta^{-1} \in \mathbb{R}$.
Let $O_{\zeta}$ be the braid orbit of the four matrices in $\operatorname{GL}_2( \mathbb{Z}[\alpha] )$ obtained by replacing  $\sqrt{2}$ in~\eqref{O8} with $\alpha$.

By the Riemann--Hilbert correspondence there
should exist a family of second order differential equations with monodromy in $O_{\zeta}$.
Three singularities $x_1,x_2,x_3$ can be moved to $0,1,\infty$ with M\"obius transformations, leaving one singularity $x_4$ to move freely. 
The braid orbit $O_{\zeta}$ is {\em finite}, so we expect this family to be definable over a finite extension of $\mathbb{Q}(x_4)$.
In other words, we expect an algebraically defined one-dimensional family of differential equations for $O_{\zeta}$,
just like $L_+^{\text{PF}}$ for $O_{\zeta_8}$.
\end{remark}

\begin{remark} \label{Painleve}
As pointed out by the anonymous referee, certain aspects of our results in a different context appear in the literature.
Recall that the Painlev\'e VI equation $P_{\text{VI}}$ characterizes the isomonodromic deformation equation of second order Fuchsian differential equations with \emph{four} regular singular points and an apparent singular point on the Riemann sphere.
If the monodromy tuple has a finite orbit, then the corresponding Painlev\'e VI equation has algebraic solutions; all such tuples are classified in~\cite{LT14}. The differential equation $L_\pm^{\text{PF}}$ corresponds to the so-called Picard case $P_{\text{VI}}(0, 0, 0, 1)$, see~\cite{Ma01}; an Okamoto transformation (see, for example, \cite[Appendix~B]{CL07}) brings the latter to the equation $P_{\text{VI}}(1/2, 1/2, 1/2, 1/2)$ that corresponds to a monodromy tuple with four reflections and the same action
of the braid group \cite[Theorem~2.2 and Example~6.5]{DR07}.
These four reflections generate an orthogonal subgroup of $\operatorname{GO}_2(\mathbb C)$, as taking the second symmetric power yields a fixed space.
This group is finite if and only if the braid group orbit is finite.
Here the generated group is the dihedral group $D_8$ (of size~16) with braid orbit length~8.
The Okamoto transformation above can be seen as a \emph{middle convolution} with $x^{1/2}$ (compare with \cite[Section~6]{DR07}).
Applying this middle convolution $MC_{-1}$ to a symplectic monodromy tuple yields an orthogonal one; this explains why the Hadamard product of the periods with $(1-4x)^{1/2}$ is an algebraic function, which in turn, by a known result of Klein, is a pullback of a hypergeometric
function.
Furthermore, the middle convolution is invertible and can be written explicitly for the monodromy group generators.
This gives one a sanity check that the generators of the monodromy tuple for $L_\pm^{\text{PF}}$ are as claimed (up to braiding).
\end{remark}

\section{Around Dwork's conjecture}
\label{dwork}

\subsection{Dwork's conjecture}
\label{subdwork}
A conjecture of Dwork \cite[p.~784]{Dw90} from 1990 suggests that \emph{globally nilpotent} (over a number field) second order differential equations are $_2F_1$-solvable; see \cite[Section~7]{Dw90} for a precise statement.
In this form, the conjecture was shown to be false by Krammer~\cite{Kr96} and later by Dettweiler, Reiter \cite{DR10}.
Bouw and M\"oller \cite{BM10} (see also \cite{MZ16}) gave examples of \emph{globally bounded} but not $_2F_1$-solvable equations over the orders of some real quadratic extensions of~$\mathbb{Q}$.
Solving numerous second order differential equations obtained from the OEIS \cite{OEIS} suggested that this refined version of Dwork's conjecture
might still be true over~$\mathbb Q$\,---\,this was explicitly stated as Conjecture~1 in \cite{vHK19}. However, Corollary~\ref{non-2F1}
combined with Section~\ref{alg-auto} strongly indicate that this is not the case.

All examples prior to our work have been Heun-solvable, that is, they are algebraic pullbacks of suitable Heun differential equations (with exactly four singularities).
Our equation $L_+^x$ has an additional apparent singularity, so it is not of Heun type.
To give a more detailed comparison, we took an example from \cite{MZ16}, wrote it as a pullback of the Heun equation:
\begin{align*}
L_{\text{Heun}}
&=\frac{\d^2}{\d t^2}
+\frac{768t^2+2(638-217\sqrt{17})t-895+217\sqrt{17}}{t(t-1)(256t+895-217\sqrt{17})}\, \frac{\d}{\d t}
\\ &\quad
+\frac{60(4t+3-\sqrt{17})}{t(t-1)(256t+895-217\sqrt{17})}
\end{align*}
and computed the monodromy of the latter. Up to equivalence, it is
\[
\left[
\begin{pmatrix} 1 & 1 \\  0 & 1 \end{pmatrix}, \;
\begin{pmatrix} 1 & 0 \\ -\frac{7}{2}-\frac{\sqrt{17}}{2} & 1 \end{pmatrix}, \;
\begin{pmatrix} -\frac{3}{2}-\frac{\sqrt{17}}{2} & \frac{9}{8}+\frac{\sqrt{17}}{8} \\ -\frac{13}{2}-\frac{3 \sqrt{17}}{2} & \frac{7}{2}+\frac{\sqrt{17}}{2} \end{pmatrix}
\right]_\sim
\]
(the omitted 4th matrix is the inverse product of the other matrices).
The braid orbit of this monodromy is not finite.  That distinguishes it from our example $L_+^x$, which (recall Remark~\ref{BraidO}) is an example of the following type of family of differential equations:
four non-apparent singularities that are moved non-trivially by an algebraic parameter $t$,
where the monodromy is {\em locally constant} when we vary $t$ (avoiding degenerate $t$-values where singularities collide).
Any member of this type of family (obtained by substituting a non-degenerate value for~$t$) will necessarily have a monodromy with a finite braid orbit.
Thus $L_{\text{Heun}}$, with its infinite braid orbit, does not occur as a member {\em in this type of family}.
Several features\,---\,defined over~$\mathbb{Q}$, and an additional parameter\,---\,appear to be novel in our (expected to be a) counter example to Dwork's conjecture for the globally bounded case.

Having discussed differences,  similarities are the subject of Section~\ref{teich} below.
One starts with a Hilbert modular surface and a family of hyperelliptic curves on it.

\subsection{Teichm\"uller curves on Hilbert modular surfaces}
\label{teich}
For an order $\mathcal{O}_D$ in a real quadric field of discriminant $D$, where $D\equiv0$ or $1\bmod4$ is not a square, one forms the Hilbert modular surface
\[ X_D:=\mathbb H^2/\operatorname{SL}_2(\mathcal{O}_D). \]
It can be interpreted as the moduli space of special principally polarised abelian surfaces, and the
image of the corresponding map $X_D \to \mathcal{A}_2$ is the Humbert surface $H_D$.
McMullen has shown (see \cite{Mc23} for an overview) that on each $X_D$ there is one (primitive) Teichm\"uller curve, unless $D \equiv 1\bmod 8$ in which case there are two such curves.
It is known that the Picard--Fuchs equation for the corresponding family of $g=2$ Veech curves
over such a Teichm\"uller curve decomposes, one factor being the uniformising differential equation for
the (open) Teichm\"uller curve.  A salient feature of these Teichm\"uller curves is that
the eigenform has a double zero on the hyperelliptic curve. The monodromy groups are discrete in
$\operatorname{SL}_2(\mathbb R)$, but not commensurable to the modular group $\operatorname{SL}_2(\Z)$.
The example of Bouw and M\"oller corresponds to the case $D=17$ and their operator is defined over $\Q(\sqrt{17})$.

In our situation, we are dealing with a two-parameter family of hyperelliptic curves with real multiplication
by $\sqrt{2}$. This means that our family is pulled back from the Hilbert modular surface $X_8$.
For a general member $C^{u,x}$ of our family, the differential does not have a double zero.
This shows that the phenomena described by Bouw and M\"oller are not due to the Teichm\"uller property,
but seem to stem from the real multiplication itself and
extend 
to the
whole surface
$X_8$. We now describe this
double zero scenario for $X_8$.  Let $\omega$ be the differential in equation~\eqref{hyperY}, which is then of the form
\[
\frac{\sqrt{-2}}{\pi}\int \omega
\]
This $\omega$ is a differential of the first kind on our hyperelliptic curve, defined by the equation $Y^2=H(v)$, where
$H(v)$ denotes the sextic in $v$ (depending also on $t$ and $x$) from the square root of the denominator.
In general, such a differential has $2g-2=2$ zeros, and they appear at the two points where the numerator of the differential $\omega$ vanishes: that is, when $v=1-t$, hence $Y=\pm\sqrt{H(1-t)}$. By the theory of McMullen (see \cite{Mc23} for an overview) the case, where these two points coalesce, characterizes the Teichm\"uller condition inside the Humbert (or Hilbert) modular surface.
It is clear that these two points can only coalesce when they become a root of the defining sextic.
Therefore we are left to study the curve in the $(t,x)$-space defined by the condition $H(1-t)=0$, which after the expansion reads
$(t-1)(1 - 2t - 128t(1-t)^3 x)$.
Ignoring the trivial double root originating from $t=1$ we take a closer look at the one coming from
\[
x=\frac{1-2t}{128t(1-t)^3}.	
\]
Substituting this into the integral \eqref{hyperY} gives
\begin{equation*}
I = \frac{2}{\pi}\int  \frac{(t-1)^{3/2} (v+t-1) \ \d v \ \,}{\sqrt{
(v+t)(tv+t^2-1)(v-t+1)(v+t-1)(v^2(1-2t)+2t^3-3t^2+1)
}}.
\end{equation*}
This integral satisfies 
\[
\frac{\d^2}{\d t^2} I + \frac{2(8t^4-16t^3+6t-1)}{ (t-1)(4t^2-1)(2t^2-1)} \frac{\d}{\d t} I
+\frac{ 2t^4-4t^3+15t^2-6t-1}{(t-1)^2(4t^2-1)(2t^2-1)} I = 0.
\]
Solving that gives 
\[
\frac{1-t}{t^{3/2}}
 \, {}_2F_1\biggl(\begin{matrix} \frac38, \, \frac38 \\ 1 \end{matrix}\biggm|\frac{4t^2-1}{4 t^4}\biggr)
\]
which equals $I$ if the integral is taken from $v  = -t$ to  $v=t^{-1}-t$ and $t$ is sufficiently large.
The monodromy group of this ${}_2F_1$ is the (non-arithmetic!) triangle group $(4,\infty,\infty)$.
Thus, our curve maps to the Teichm\"uller curve $W_8$.

\begin{remark}[The case $X_5$]  
\label{Remark2}
The relation between $X_8$ and $L_+^x$ motivated us to compute a similar family of differential equations for the surface $X_5$. After some
transformations to reduce its size we found
\begin{align*}
	L_5 &= \frac{\d^2}{\d x^2}  + \frac{2x^3+(u^2+6u+1)x^2 - 16u^2}{x (x^3+(u^2+6u+1)x^2+8u(u+1)x+16u^2)} \,\frac{\d}{\d x}
\\ &\qquad
	+\frac{(25x^2-(u+3)^2 x-40u(u+3))}{100x(x^3+(u^2+6u+1)x^2+8u(u+1)x+16u^2)}.
\end{align*}
The monodromy of $L_5$, for any non-degenerate value of $u$, is in $O_{\zeta_5} = O_{\zeta_{10}}$ from Remark~\ref{BraidO}.
A claim like Remark~\ref{Rem2} applies here as well, every equation whose monodromy is in $O_{\zeta_5}$
can be solved in terms of $L_5$ and hence in terms of hyperelliptic integrals.
Taking a local solution of $L_5$ at $x = \infty$, multiplying the local parameter by $5$ and replacing $u$ with $5u-3$,
produces a power series whose coefficients in $\mathbb{Z}[u]$ are divisible by $\binom{2n}{n}$ and satisfy the recursion
\begin{align*}
&
(n+1)^2 a_{n+1}
+5\big((5n+3)(5n+2)u^2-2(2n+1)^2\big) a_n
\\ &\;
+ 100(5u-3)\big(2(5u-2)n^2-3u+1\big) a_{n-1}
+500(5u-3)^2(2n+1)(2n-3)a_{n-2}
= 0
\end{align*}
for $n=0,1,2,\dots$, with the convention $a_0=1$ and $a_n=0$ for $n<0$.
\end{remark}

\section{Cubes of Legendre polynomials}\label{cube}

We now briefly turn our attention to the generating function \eqref{cub}.

\begin{theorem}
\label{leg-cube}
Let $p = 1- x y^3 + x^2 (3 y^2 - 2)/4$ and
\[ H = \frac1{\sqrt{p}} \cdot {}_2F_1\biggl(\begin{matrix} \frac14, \, \frac34 \\ 1 \end{matrix} \bigg|  \frac{(y^2-1)^3 (x^2 - \frac14 x^4)}{p^2} \biggr). \]
Then
\[ F_{P^3}(y,z) = \frac1{\sqrt{1+z^2}} \left( H \star \frac1{\sqrt{1-4x}} \right) \bigg|_{x = z/(1+z^2)}  \]
where the Hadamard product $\star$ is with respect to the internal variable~$x$.
\end{theorem}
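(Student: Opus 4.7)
The plan is to verify the identity in the same spirit as Theorem~\ref{main}: compute a linear ODE in $z$ satisfied by both sides, and then check matching initial conditions.

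The key structural observation is that $F_{P^3}(y,z) = F_P(y,z) \star_z F_{P^2}(y,z)$, the Hadamard product in $z$ of the algebraic function $F_P(y,z)=(1-2yz+z^2)^{-1/2}$ with $F_{P^2}(y,z)$. By Wan's formula~\eqref{Wan} together with the second-order hypergeometric ODE for ${}_2F_1(\tfrac14,\tfrac34;1;\cdot)$, the function $F_{P^2}(y,z)$ satisfies a second-order linear ODE in $z$. Hence $F_{P^3}(y,z)$ satisfies a linear ODE in $z$ of order at most $1\cdot 2 = 2$ (Hadamard of a first-order and a second-order operator).

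First I would derive the explicit second-order ODE for $F_{P^3}(y,z)$, via the standard algorithm for the ODE of a Hadamard product applied to the ODEs of $F_P$ and $F_{P^2}$. For the RHS, $H(x,y)$ is an algebraic function times ${}_2F_1(\tfrac14,\tfrac34;1;\cdot)$ and so satisfies a second-order ODE in $x$; its Hadamard product in $x$ with the algebraic series $(1-4x)^{-1/2}$ therefore also satisfies a second-order ODE in $x$. The substitution $x = z/(1+z^2)$ together with the algebraic multiplier $(1+z^2)^{-1/2}$ is an algebraic pullback that preserves the ODE order, producing a second-order ODE in $z$ for the RHS. I would then compare the two ODEs over $\mathbb{Q}(y)(z)$ and verify matching initial data: at $z=0$ both sides evaluate to $P_0(y)^3 = 1$, and the first $z$-derivatives agree (the RHS derivative is computable from $H(0,y)=1$, the first Taylor coefficient of $H$ in $x$, and the chain rule applied to $x=z/(1+z^2)$).

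The hard part will be managing the size of the intermediate rational functions: both the Hadamard-product ODE construction and the algebraic pullback under $x=z/(1+z^2)$ produce large coefficients, so in practice the verification is a computer-algebra calculation rather than a pencil-and-paper one. A more conceptual route, explaining where $H$ comes from, would start from the Legendre product formula $P_n(y)^2 = \tfrac{1}{\pi}\int_0^\pi P_n(y^2+(1-y^2)\cos\phi)\,d\phi$ to write $F_{P^3}(y,z)$ as a single $\phi$-integral of the Heine generating function $\sum_n P_n(\cos\alpha)P_n(\cos\beta)z^n$, then apply a Clausen/Landen-type quadratic transformation to pass from ${}_2F_1(\tfrac12,\tfrac12;1;\cdot)$ to ${}_2F_1(\tfrac14,\tfrac34;1;\cdot)$, and finally recognize the result via the integral representation $(1-4x)^{-1/2}\star g(x) = \tfrac{1}{\pi}\int_0^\pi g(2x(1-\cos\phi))\,d\phi$ with $x_\phi = 2z(1-\cos\phi)/(1+z^2)$. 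Pinning down the precise quadratic transformation and checking that the algebraic prefactor produces exactly $p = 1-xy^3+x^2(3y^2-2)/4$ would be the real obstacle on this conceptual route.
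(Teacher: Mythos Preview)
Your verification strategy---compute an ODE in $z$ for each side and match initial data---is sound and is what the paper itself endorses (``it is relatively easy to verify this formula''). However, your order count contains a genuine error. There is no theorem bounding the order of a Hadamard product by the product of the orders; already $(1-4x)^{-1/2}\star{}_2F_1(a,b;1;x)={}_3F_2\bigl(\tfrac12,a,b;1,1;4x\bigr)$ has minimal order~$3$, not $1\cdot2=2$. (What is multiplicative is the order of the coefficient \emph{recurrence}, not of the ODE.) In fact the paper states, in the sentence immediately after this proof, that the minimal operator $L_{P^3}$ for $F_{P^3}(y,z)$ has order \emph{four}. The same holds on the right-hand side: twisting the order-$2$ function $H$ by $\binom{2n}{n}$ raises the order back to~$4$---the paper's point is precisely that the \emph{drop} from $4$ to $2$ upon \emph{un}twisting is the surprise. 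So your plan must be run with fourth-order operators and correspondingly more initial coefficients; with that correction it goes through.

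The paper's own argument is discovery-oriented rather than verification-oriented. It starts from the fourth-order $L_{P^3}$, observes the $z\mapsto1/z$ symmetry of its singularities (which is what singles out $x=z/(1+z^2)$ as the generator of the fixed subfield of $\mathbb{Q}(z)$), rewrites the equation in~$x$, and then notices empirically that the coefficients of the holomorphic solution are divisible by $\binom{2n}{n}$; dividing them out yields a series whose ODE unexpectedly collapses to order~$2$, which is then solved as a ${}_2F_1$ pullback to produce~$H$. Your proposal skips this path and jumps straight to verification, which is legitimate for a proof but loses the explanation of where $x=z/(1+z^2)$ and $H$ come from. Your alternative route via the Legendre product integral and a Heine-type bilinear generating function is a genuinely different idea that the paper does not pursue; as you already anticipate, pinning down the quadratic transformation and matching the prefactor $p$ would be the real work there.
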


Again it is relatively easy to verify this formula, and the proof below mainly outlines how it was found.

\begin{proof}
First observe $z \mapsto 1/z$ symmetry in the singularities.
That suggests that we can write the equation in terms of $x = z/(1+z^2)$, a generator of the subfield of $\mathbb Q(z)$, which is fixed under the symmetry.
Indeed, the corresponding change of variables 
produces the equation that is nearly rational in~$x$ (it is rational after a simple multiplication of solutions by an algebraic factor).
Then unexpectedly, like in subsection~\ref{Had}, we observe that the new equation has the `central binomial twist' property: the coefficients of its analytic solution at the origin can be divided by $\binom{2n}{n}$ leading to a series which is still globally bounded.
More surprisingly, the differential equation for the resulting series has order~2!
A closed form (hypergeometric) solution can then be found quickly.
\end{proof}

The fourth order differential operator $L_{P^3}$ for \eqref{cub} with respect to the variable $z$ decomposes as a symmetric product of second order equations just as in Section~\ref{discovery}.
But the coefficients are in a genus~1 function field. The question remains if these second order equations can also be solved in terms of algebraic integrals.
We hoped to find a connection to some $O_{\zeta}$ from Remark~\ref{BraidO}, but those have 4 logarithmic singularities, a feature we did not encounter
in equations we were able to obtain from $L_{P^3}$.

\section{Conclusion and future work}
\label{final}

A task of recognising the innocent-looking generating function \eqref{orig} of the squares of Legendre polynomials in a closed form has turned out to produce families of hyperelliptic integrals that satisfy second order Picard--Fuchs differential equations, with the latter having remarkable features.
Next one can investigate other Humbert surfaces corresponding to different orders, and we did for $X_5$ in Remark~\ref{Remark2}.
The corresponding second order equations are solved by hyperelliptic integrals with the underlying Jacobians admitting RM by $\mathbb Z[(1+\sqrt5)/2]$. We plan to do this for more Humbert surfaces elsewhere;
the underlying geometric structures for small orders are already classified in \cite{EK16,KM16}.

After analyzing the relation between $X_8$ and $O_{\zeta_8}$ from Remark~\ref{BraidO}, and $X_5$ and $O_{\zeta_5}$ in Remark~\ref{Remark2},
the first author constructed a program (available at \cite{supp})
that takes as input a positive integer $N$ and a non-cuspidal point $P$ on $X_1(N)$.
It computes a second order differential operator $L_{N,P}$, and a hyperelliptic integral that satisfies $L_{N,P}$,  both defined over $\mathbb{Q}(P)$.
To our surprise, not only can hyperelliptic integrals satisfy second order equations, but also, we now have a program that can produce examples of arbitrarily high genus.
This $L_{N,P}$ conjecturally has monodromy in $O_{\zeta_N}$, implying that the algorithm produces families of hyperelliptic curves
with real multiplication by $\mathbb{Z}[ \zeta_N + \zeta_N^{-1} ]$, as in Mestre \cite{Me91}.


Interestingly, the discussed surface $X_8$ (and it only!) has received a differential treatment 35~years ago in \cite{SY88}:
as an illustration of their general construction, Sasaki and Yoshida explicitly gave a rank~4 system of partial differential equations for the Hilbert surface $X_8$.
We expect that system to be related to the one from Section~\ref{discovery}. To test this we checked that it has a similar decomposition to rank~2, which it does.

Our proof of Theorem~\ref{th1} originates the following expectation of an arithmetic flavour, which may be regarded as a potential correction of Dwork's ex-conjecture.
Assume that $f(x)=\sum_{n=0}^\infty a_nx^n$ is in $1+x\mathbb{Z}[[x]]$ and that its twist $\sum_{n=0}^\infty\binom{2n}{n}a_nx^n$ by the central binomial coefficients satisfies a second order linear differential equation with regular singularities.
Must $f(x)$ be algebraic?

Our proofs of Theorems~\ref{th1} and~\ref{leg-cube} also demonstrate that closed forms of arithmetic (globally bounded) series may be obtained from `untwisting' the coefficients by the central binomial coefficients $\binom{2n}{n}$
(or more generally by $(\alpha)_n/n!$ for some $\alpha \in \mathbb{Q}\setminus\mathbb Z$).
Such untwisting
may depend on a
special choice of the variable, with respect to which the series are expanded. The choices required in our proofs were \emph{ad hoc}, though hinted by the corresponding differential equations.
We would therefore question whether there exists a \emph{routine} strategy for performing the related change of variable directly from the equations, in a greater generality.
Such a methodology, if it exists, has potential to shed more light on a famous conjecture of Christol \cite{Ch15} about globally bounded solutions of Picard--Fuchs differential equations.

\medskip
\noindent
\textbf{Acknowledgements.}
We thank Alin Bostan for his comments on the proof of Theorem~\ref{th1} and Stefan Reiter for pointing out the reference~\cite{BR13}. Furthermore, we are thankful to the anonymous referee for their enthusiastic report and for bringing to us the connection with Painlev\'e equations.
The third author (W.Z.) thanks the Max Planck Institute for Mathematics in Bonn for its hospitality in July 2023 when part of his work on this project was materialized.


\end{document}